\documentclass{article}
\usepackage{latexsym,amsfonts,amsthm,amsmath,mathrsfs}
\newtheorem{theo}{Theorem}[section]
\newtheorem{lem}{Lemma}[section]

\newtheorem{prop}{Proposition}[section]
\newtheorem{cor}{Corollary}[section]

\theoremstyle{definition}
\newtheorem{defi}{Definition}
\newtheorem{rem}{Remark}[section]

\newcommand{\R}{\mathbb{R}}
\newcommand{\RN}{\mathbb{R}^{N+1}_+}

\newcommand{\N}{\mathbb{N}}
\newcommand{\la}{\lambda}
\newcommand{\ve}{\varepsilon}

\pagestyle{myheadings}

\title{The pseudorelativistic Hartree equation with a general nonlinearity: existence, non existence and variational identities}
\author{{\sc Dimitri Mugnai}\thanks{Research supported by the Miur PRIN {\sl Variational methods and nonlinear PDEs}}\\Dipartimento di Matematica e
Informatica\\Universit\`a di Perugia\\Via Vanvitelli 1, 06123
Perugia - Italy\\ tel. +39 075 5855043, fax. +39 075 5855024,\\
e-mail: mugnai@dmi.unipg.it}

\date{}
\begin{document}

\maketitle
\begin{abstract}
We prove several existence and non existence results of solitary
waves for a class of nonlinear pseudo--relativistic Hartree
equations with general nonlinearities. We use variational methods
and some new variational identities involving the half Laplacian.
\end{abstract}

Keywords: pseudo--relativistic Hartree equation, positive potential

2000AMS Subject Classification: 35Q55, 35A15, 35J20, 35Q40, 35Q85

\section{Introduction, motivations and main result}

In \cite{FL}, Fr\"ohlich e Lenzmann studied the Schr\"odinger
equation with a Hartree nonlinearity
\begin{equation}\label{1FL}
i\psi_t=\sqrt{-\Delta+m^2}\psi-\left(\frac{1}{|x|}\ast
|\psi|^2\right)\psi \quad \mbox{ in }\R^3
\end{equation}
as a description of pseudorelativistic boson stars (see \cite{2} for
the rigorous derivation of the model). Here $\psi(t,x)$ is a
complex--valued wave field (a one-particle wave function), the
symbol $\ast$ stands for the usual convolution in $\R^3$, and
$1/|x|$ is the Newtonian gravitational potential (after setting all
physical constants equal to 1). The operator $\sqrt{-\Delta+m^2}$,
which coincides with the so--called {\em half Laplacian}
$(-\Delta)^{1/2}$ when $m=0$, describes the kinetic and rest energy
of a relativistic particle of mass $m\geq0$\footnote{The kinetic
energy of a fermion with mass $m
> 0$ is described by the pseudo--differential operator
$\sqrt{-\Delta+m^2}-m$.}, and can be defined in several ways. For
example, we can associate to $\sqrt{-\Delta+m^2}$ its symbol
$\sqrt{k^2+m^2}$ in the following way: for any $f\in H^1(\R^N)$ with
Fourier transform ${\cal F} f$, we define
\[
{\cal F}(\sqrt{-\Delta+m^2} f)(k)=\sqrt{|k|^2+m^2}{\cal F}f(k),
\]
which is actually well defined in $H^{1/2}(\R^N)$, see \cite{LL} for
a complete description of this method.

We will follow another approach to define $\sqrt{-\Delta +m^2}$,
extending to the whole Euclidean space the ``Dirichlet to Neumann''
procedure (see, for example, \cite{CT}), which consists in realizing
the nonlocal operator $\sqrt{-\Delta+m^2}$ in $\R^3$ through a local
problem in $\R^4_+=\R^3\times(0,\infty)$. In view of our general
statements, we will present this procedure in every dimension
$N\geq2$: for any function $u\in {\mathscr S}(\R^N)$ there exists a
unique function $v\in {\mathscr S}(\RN)$ such that
\begin{equation}\label{soprax}
\begin{cases}
-\Delta v+m^2v=0 & \mbox{ in }\RN\\
v(x,0)=u(x) & \mbox{ on }\partial \RN=\R^N\times \{0\}\simeq \R^N,
\end{cases}
\end{equation}
i.e. $v$ is the generalized harmonic extension of $u$ in $\RN$.
Here, and in the following, we shall denote by $x$ a generic point
of $\R^N$. Now, consider the operator $T$ defined as
\begin{equation}\label{13}
Tu(x)=-\frac{\partial v}{\partial x_{N+1}}(x,0).
\end{equation}
It is readily seen that the system
\[
\begin{cases}
-\Delta w+m^2w=0 & \mbox{ in }\RN\\
w(x,0)=-\frac{\partial v}{\partial x_{N+1}}(x,0)=Tu(x) & \mbox{ on
}\partial \RN=\R^N\times \{0\}\simeq \R^N,
\end{cases}
\]
admits the unique solution $w(x,y)=-\frac{\partial v}{\partial
x_{N+1}}(x,y)$, and thus by \eqref{soprax} we immediately have
\[
T(Tu)(x)=-\frac{\partial w}{\partial x_{N+1}}(x,0)=\frac{\partial^2
v}{\partial x_{N+1}^2}(x,0)=(-\Delta v+m^2v)(x,0).
\]
In conclusion, $T^2=-\Delta+m^2$, i.e the
operator $T$ mapping the Dirichlet datum $u$ to the Neumann datum
$-\frac{\partial v}{\partial x_{N+1}}(\cdot,0)$ is a square root of the
generalized Laplacian $-\Delta+m^2$ in $\RN$.

Going back to \eqref{1FL}, such an equation does not take into
account the presence of outer influences or mutual interactions
among particles, so that it seems natural to include the presence of
a potential, considering
\begin{equation}\label{2FL}
i\psi_t=\sqrt{-\Delta+m^2}\,\psi-\lambda\big(W\ast |\psi|^2\big)\psi
+F'(\psi)\quad \mbox{ in }\R^N,
\end{equation}
where $\lambda\in\R$ and $F:\R\to \R$ is a $C^1$ potential having
some good invariant (common in Abelian Gauge Theories, see
\cite{bfcmp}, \cite{mug}, \cite{nuovo}): typically, one requires
some conditions of the form $F(e^{i\theta}u)=F(u)$ and
$F'(e^{i\theta}u)=e^{i\theta}F'(u)$ for any function $u$ and any
$\theta\in \R$, which is obviously satisfied by linear combinations
of power--like nonlinearities. Finally, $W$ is a radially symmetric
weight function which generalizes in $\R^N$ the Newtonian potential
$1/|x|$ in $\R^3$. In particular, we will also assume that $W$ can
be decomposed as sum of a bounded function plus another function
having suitable integrability (see Theorem \ref{main} for the
precise assumptions).

We are interested in solitary wave solutions of \eqref{2FL} of the
form
\[
\psi(x,t)=e^{-i\omega t}u(x), \mbox{ where $\omega\in \R$ and
$u:\R^N\to\R$};
\]
thus, since the Fourier transform acts only on the
$x$--variables, it is readily seen that $u$ solves
\[
\sqrt{-\Delta+m^2}u-\omega u-\lambda\big(W\ast u^2\big)u
+F'(u)=0\quad \mbox{ in }\R^N.
\]
Actually, there is no reason to discard an $x$--dependence in the
potential $F=F(x,s)$, so that we will actually consider the
following nonlinear Hartree equation with potential:
\begin{equation}\label{P}
\sqrt{-\Delta+m^2}u-\omega u-\lambda\big(W\ast u^2\big)u
+F_s(x,u)=0\quad \mbox{ in }\R^N.
\end{equation}
Let us remark that in $\R^3$ with $W(x)=1/|x|$ such an equation is
equivalent to the following system of pseudorelativistic
Schr\"odinger--Poisson type:
\begin{equation}\label{modello}
\begin{aligned}
\sqrt{-\Delta u+m^2}u-\omega u-\lambda u \phi+F_s(x,u)=0 &\quad \mbox{ in }\R^3,\\
-\Delta\phi=4 \pi u^2&\quad \mbox{ in }\R^3,
\end{aligned}
\end{equation}
where the second equation represents the repulsive character of the
Coulomb force (the attractive case is described by the equation
$\Delta\phi=4\pi u^2$, see \cite{ruarrso}). We also note that this
very last system is the pseudorelativistic version of the classical
Schr\"odinger--Poisson system, or Hartree--Fock equation, which has
been object of a large interest in the last decade, also for its
applications in modelling molecules and crystals, and we only quote
\cite{amru}, \cite{bftop}, \cite{cdl}, \cite{ce}, \cite{cale},
\cite{calbli}, \cite{tdnonex}, \cite{teadim}, \cite{MaZhao},
\cite{SPpos}, \cite{ru} and the papers cited therein as a brief
references list.

When $N=3$, the case $\lambda=1$ and $F\equiv0$ was studied in \cite{11} (for
existence of spherically symmetric solutions), in \cite{FJL} (for
stability of standing wave solutions of \eqref{1FL}), in \cite{FL}
(for instability of standing wave solutions of \eqref{1FL} when
$m=0$).

The fact that the natural domain for the governing operator in
\eqref{P} is $H^{1/2}(\R^N)$, forces to decrease the range of
natural exponents in the nonlinearities which could appear in the
equation, according to the Sobolev Embedding Theorem; more
precisely, if $N=3$, the critical exponent is 3; hence, all
subcritical superlinearities in the equation may have a growth
between 1 and 2. In fact, equation \eqref{P} was considered in
\cite{czn} when $\lambda=1$, $\omega<m$ and $m>0$, with
\begin{equation}\label{protW}
F(x,s)=F(s)=-\frac{|s|^p}{p}, \qquad p\in\left(2,\frac{2N}{N-1}\right).
\end{equation}

Let us remark that the prototype potential defined in \eqref{protW}
is not positive, while for physical reasons a potential suitable to
model physical phenomena should be nonnegative: indeed, the fact
that $F$ is nonnegative implies that the potential energy density of
a solution of equation \eqref{P} has more nonnegative contributions,
as one can see from the expression of the Lagrangian in equation
\eqref{J} below. Another reason to consider positive potential is
that, if we consider the classical autonomous electrostatic case
$-\Delta u+F'(u)=0$, calling ``rest mass'' of the particle $u$ the
quantity
$$
\int F(u)\,dx
$$
(see \cite{bfarc}), the fact that $F$ is positive implies that the
systems under consideration has - a priori - {\em positive mass},
which is, of course, relevant from a physical viewpoint.

Therefore, in this paper, as far as existence is concerned, we shall
consider equation \eqref{P} under the assumptions that $F$ is {\em
nonnegative} and $m>0$.

Using the approach with the operator $T$ introduced in \eqref{13},
we rewrite equation \eqref{P} as the following system:
\begin{equation}\label{PN}
\begin{cases}
-\Delta v+m^2v=0 &\mbox{ in }\R^{N+1}_+,\\
-\frac{\partial v}{\partial x_{N+1}}=\omega v+\lambda\big(W\ast
v^2\big)v -F_s(x,v)&\mbox{ on }\partial
\R^{N+1}_+=\R^N\times\{0\}\simeq \R^N.
\end{cases}
\end{equation}

As usual, for physical reasons, we look for solutions having finite
energy, i.e. $v\in H^1(\RN)$, $H^1(\RN)$ being the usual Sobolev space endowed with
the scalar product
\[
\langle v,w\rangle_{H^1}:=\int_{\RN}\big(D v\cdot D w+m^2vw\big)\,dxdx_{N+1}
\]
and norm $\|v\|=(\int |Dv|^2+m^2\int v^2)^{1/2}$.

\begin{defi}
A function $v\in H^1(\RN)$ is said to be a (weak) solution of
problem \eqref{PN} iff for every $w\in H^1(\RN)$ there holds
\begin{equation}\label{defsol}
\int_{\RN}(Dv\cdot Dw+m^2vw)\,dxdx_{N+1}=\int_{\R^N}\left[\omega
v+\lambda\big(W\ast v^2\big)v-F_s(x,v) \right]w\,dx.
\end{equation}
\end{defi}

\begin{rem}
We remark that, whenever a solution $v\in H^1(\RN)$ of \eqref{PN} is given, then
\[
\int_{\R^N}F_s(x,v)w\,dx\in \R \quad\mbox{ for all }w\in H^1(\RN)
\]
without assuming any growth condition on $F$.
\end{rem}

In this paper we are concerned with problem \eqref{PN} in presence
of a generally nontrivial potential $F$, always neglected in the
previous papers, except for \cite{czn}; hence, in order to cover the
trivial case $F\equiv 0$, for the main existence result (see Theorem
\ref{main}) we shall use the following  general superlinear and
subcritical assumptions, which, however, can be relaxed for the
other existence and for the non existence results, see Propositions
\ref{lambda0}, \ref{Lagrange} and \ref{genus} and Theorem
\ref{nonex} below:
\begin{description}
\item[$F_1)$] $F:\R^N\times \R\to [0,\infty)$ is such that the
derivative $F_s:\R^N\times \R\to \R$ is a Carath\'eodory function,
$F(x,s)=F(|x|,s)$ for a.e. $x\in \R^N$ and for every $s\in \R$, and
$F(x,0)=F_s(x,0)=0$ for a.e. $x\in \R^N$;
\item[$F_2)$] $\exists\,C_1,C_2>0$ and $2<\ell<p<2N/(N-1)$
such that
\[
\mbox{$|F_s(x,s)|\leq C_1|s|^{\ell-1}+C_2|s|^{p-1}$ for a.e. $x\in \R^N$ and every
$s\in \R$};
\]
\item[$F_3)$] $\exists\,k\geq 2 \mbox{ such that } 0\leq
sF_s(x,s)\leq kF(x,s)$  for a.e. $x\in \R^N$ and every
$s\in \R$.
\end{description}

\begin{rem}
Condition $F_3)$ is a kind of {\it reversed} Ambrosetti--Rabinowitz
condition, already used, for instance, in \cite{bf}, \cite{nuovo}
and \cite{SPpos}.
\end{rem}

\begin{rem}
As it will be clear from the existence proof, the requirement that $F$ depends
radially on the space variable is a technical assumption which lets
us reduce the problem to a radial setting and use some compactness
properties in the associated Sobolev space.
\end{rem}

\begin{rem}
From $F_1)$ we immediately get that $F$ has  an absolute minimum point for $s=0$ and
for a.e. $x\in \R^N$, so that problem \eqref{PN} always admits the
trivial solution $u=0$. Moreover, by direct integration of  $F_2)$ we get
\begin{equation}\label{crescitaW}
0\leq F(x,s)\leq \frac{C_1}{\ell}|s|^\ell+\frac{C_2}{p}|s|^p
\end{equation}
for every $s\in \R$ and a.e. $x\in \R^N$, so that  the potential $F$
is superquadratic at 0 and subcritical at infinity, as in the case
of \eqref{protW}. For this reason, in $F_3)$ we exclude the case
$k<2$, since by simple calculations we would get $F\equiv 0$,
already considered in the related literature.
\end{rem}

\begin{rem}
Our assumptions include the case $F\equiv 0$, but even the more
intriguing cases in which $F(x,s)=0$ for $x$ belonging to a proper
subset of $\R^N$ - a  ball (so that the potential $F$ is active only
in an exterior domain), an annulus, \ldots - or for some values of
$s$ - for instance when $s$ is large or small. All these situations
are completely new, and, to our best knowledge, this paper is the
first one to consider \eqref{PN} with a potential, even possibly
vanishing somewhere. In particular, we remark that no control from
below is assumed on the potential $F$, as done, for instance, in
\cite{bfcmp} for a Klein--Gordon--Maxwell system with positive
potential.
\end{rem}

Since $u=0$ is a solution of problem \eqref{PN}, we are interested
in nontrivial solutions. But, before giving our first existence
result, we make precise the assumptions on the weight potential $W$,
which will be assumed from now on:
\medskip

\noindent {\bf W)}: $W:\R^N\to [0,\infty)$ is such that
$W(x)=W(|x|)$, $W=W_1+W_2$, where $W_1\in L^r(\R^N)$ for some $r\in
(N/2,\infty)$, and $W_2\in L^\infty(\R^N)$.

\begin{rem}
If $N=3$ we can take the Newton potential $W(x)=1/|x|$, which is
bounded at infinity and in $L^r$ near the origin for $r<N$, so that
all our results cover the physical problem \eqref{modello}. However,
if we require that solutions have a constant $L^2$ norm equal to $M$
(like in \cite{11}), it turns out that the Newton potential is
critical, in the sense that if $W(x)=1/|x|$, then a radial,
real--valued, nonnegative ground state solitary wave in
$H^{1/2}(\R^3)$ does exist only if $M<M_c$, $M_c$ being the
Chandrasekhar limit mass for boson stars modelled by \eqref{1FL}
(and in such a case the solution is found via a minimization
process). Therefore, not fixing the $L^2$ norm of solutions leaves
us more freedom in the quest of solutions.

Moreover, another classical potential we can treat when $N=3$ is any
Yukawa type two body interaction, that is $W(x)=e^{-\mu|x|}/|x|$,
$\mu\geq0$.
\end{rem}

\begin{rem}
In contrast to \cite{czn} and to the behaviour of the Newton or
Yukava potential, we do not require that $W(x)\to 0$ as $|x|\to
\infty$, so that we are allowed to consider a wider class of
kernels.
\end{rem}

We are ready for our main existence result.
\begin{theo}\label{main}
Assume that $F$ satisfies $F_1)$, $F_2)$ and $F_3)$ with $k\leq 4$
and that $W$ satisfies {\bf W)}. Then for any $\la>0$ and $\omega<m$
there exists a nontrivial solution $v\in H^1(\RN)$ of problem
\eqref{PN} which is radially symmetric in the first $N$ variables.
If, in addition, $F(x,s)\geq F(x,|s|)$ for a.e. $x\in \R^N$ and all
$s\in \R$, then $v$ is strictly positive in $\overline{\RN}$.
\end{theo}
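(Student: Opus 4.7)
The plan is to apply the mountain pass theorem to the $C^{1}$ functional naturally associated with \eqref{PN},
$$
J(v)=\tfrac{1}{2}\|v\|^{2}-\tfrac{\omega}{2}\int_{\R^{N}}v^{2}\,dx-\tfrac{\lambda}{4}\int_{\R^{N}}(W\ast v^{2})v^{2}\,dx+\int_{\R^{N}}F(x,v)\,dx
$$
on $H^{1}(\RN)$, with trace values on $\R^{N}\times\{0\}$ understood in the nonlinear integrals. Because both $W$ and $F(\cdot,s)$ are radial in $x$, I would restrict $J$ to the closed subspace $H^{1}_{\mathrm{rad}}(\RN)$ of functions radial in the first $N$ variables, where the trace map is compactly embedded into $L^{q}(\R^{N})$ for every $q\in(2,2N/(N-1))$; this is what restores the Palais--Smale condition, which otherwise fails on the full space by translation invariance.

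To set up the mountain pass geometry I would first use $\omega<m$, via the Fourier representation of $T$, to show that $Q(v):=\|v\|^{2}-\omega\int_{\R^{N}}v^{2}\,dx$ is equivalent to $\|v\|^{2}$. Combining this with $F\geq 0$, the pointwise bound \eqref{crescitaW}, Hardy--Littlewood--Sobolev applied to $W_{1}$, Young's inequality for $W_{2}$, and the trace--Sobolev embedding, one obtains
$$
J(v)\geq \tfrac{c}{2}\|v\|^{2}-C_{1}'\|v\|^{\ell}-C_{2}'\|v\|^{p}-C_{3}'\|v\|^{4},
$$
which is bounded below by a positive constant on a sufficiently small sphere. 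For the existence of $e$ with $J(e)<0$, integration of the upper bound in $F_{3})$ gives $F(x,tv_{0})\leq t^{k}F(x,v_{0})\leq t^{4}F(x,v_{0})$ for $t\geq 1$; since the Hartree term scales exactly as $t^{4}$, picking a radial $v_{0}$ whose $L^{2}$ trace is concentrated enough that $\tfrac{\lambda}{4}\int(W\ast v_{0}^{2})v_{0}^{2}$ strictly exceeds $\int F(x,v_{0})$ forces $J(tv_{0})\to-\infty$.

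The mountain pass theorem then produces a Palais--Smale sequence $(v_{n})\subset H^{1}_{\mathrm{rad}}$ at a positive level $c_{\mathrm{MP}}$. Its boundedness follows from the reversed condition $F_{3})$ via the identity
$$
J(v_{n})-\tfrac{1}{4}\langle J'(v_{n}),v_{n}\rangle=\tfrac{1}{4}Q(v_{n})+\int_{\R^{N}}\bigl(F(x,v_{n})-\tfrac{1}{4}v_{n}F_{s}(x,v_{n})\bigr)\,dx,
$$
where the integral is nonnegative precisely because $k\leq 4$, so $Q(v_{n})\leq 4c_{\mathrm{MP}}+o(\|v_{n}\|)$. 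Passing to a weak limit $v\in H^{1}_{\mathrm{rad}}$, radial trace compactness in $L^{q}$ for $q\in(2,2N/(N-1))$ identifies the limits of the $\omega$-term, of the Hartree integral (after splitting $W=W_{1}+W_{2}$ and using Hardy--Littlewood--Sobolev/Young), and of the $F$-term through $F_{2})$; testing $J'(v_{n})\to 0$ against $v_{n}-v$ and using the already identified limits upgrades the convergence to strong in $H^{1}(\RN)$. Hence $v$ is a critical point with $J(v)=c_{\mathrm{MP}}>0$, in particular nontrivial.

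For positivity, the extra hypothesis $F(x,s)\geq F(x,|s|)$ gives $J(|w|)\leq J(w)$ for every $w$, so replacing any admissible path $\gamma$ by $|\gamma|$ does not increase its maximum and the minimax value is still realized by a solution $v\geq 0$. The bulk equation $-\Delta v+m^{2}v=0$ in $\RN$ with $v\geq 0$, $v\not\equiv 0$ then yields $v>0$ inside $\RN$ by the strong maximum principle, and the Neumann-type boundary condition in \eqref{PN} combined with Hopf's lemma rules out boundary zeros, giving strict positivity on $\overline{\RN}$. The main obstacle I anticipate is the verification of the mountain pass geometry at infinity in the borderline case $k=4$, where $F$ and the Hartree integral have the same homogeneity and $F$ has no lower bound: one must carefully pick a radial test function whose $L^{2}$ mass is concentrated enough for the Hartree integral to strictly dominate the $F$ integral, exploiting the strict positivity of $W\ast v_{0}^{2}$ on compact sets.
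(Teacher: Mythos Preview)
Your approach is essentially the paper's: mountain pass for $J$ restricted to $H^1_r(\RN)$, Palais's symmetric criticality, boundedness of Palais--Smale sequences from the combination $4J(v_n)-J'(v_n)v_n$ using $F_3)$ with $k\leq 4$, strong convergence via the compact radial trace embedding $H^{1/2}_r(\R^N)\hookrightarrow L^q(\R^N)$ for $q\in(2,2N/(N-1))$, and positivity via $J(|w|)\leq J(w)$ together with the strong maximum principle and Hopf's lemma on the boundary.

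The one place you diverge---and make your life harder than necessary---is the geometry at infinity. You bound $F(x,tv_0)\leq t^{k}F(x,v_0)\leq t^{4}F(x,v_0)$ via $F_3)$ and then, in the borderline case $k=4$, you are forced to engineer a test function $v_0$ for which the Hartree integral strictly dominates $\int F(x,v_0)$; you flag this as the main obstacle. The paper avoids this entirely by using $F_2)$ instead of $F_3)$ here: from \eqref{crescitaW},
\[
\int_{\R^N}F(x,tv_0)\,dx\leq \frac{C_1}{\ell}t^{\ell}|v_0|_\ell^\ell+\frac{C_2}{p}t^{p}|v_0|_p^p,
\]
and since $p<2N/(N-1)\leq 4$ for every $N\geq 2$, this is $o(t^4)$, while the Hartree term scales exactly as $-\tfrac{\lambda}{4}t^4\int(W\ast v_0^2)v_0^2$. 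Hence $J(tv_0)\to-\infty$ for \emph{any} radial $v_0$ with nontrivial trace, and the obstacle you anticipate simply does not exist. (Incidentally, for the lower bound near $0$ you may drop the $-C_1'\|v\|^\ell-C_2'\|v\|^p$ terms altogether, since $F\geq 0$ contributes with a plus sign to $J$; the paper does this.)

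A minor remark on the coercivity of $Q$: the paper obtains $Q(v)\geq (1-\omega/m)\|v\|^2$ from the elementary trace inequality \eqref{C2} with $\varepsilon=m$, valid for \emph{all} $v\in H^1(\RN)$, whereas the Fourier identity you invoke is literally true only for the harmonic extension; your route still works after decomposing $v$ orthogonally into its harmonic extension plus a trace-zero remainder, but the trace inequality is more direct.
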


The assumption $\lambda>0$ in Theorem \ref{main} is not restrictive. Indeed,
just under a more general version of assumption $F_3)$ above, problem \eqref{PN}
admits only the trivial solution when $\lambda \leq 0$,
independently of any possible symmetry:
\begin{prop}\label{lambda0}
Suppose that $F_s(x,s)s\geq 0$ for a.e. $x\in \R^N$ and all $s\in
\R$, and let $v\in H^1(\RN)$ be a solution of problem \eqref{PN}
with $\lambda \leq 0$ and $\omega<m$. Then $v=0$.
\end{prop}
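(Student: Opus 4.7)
The strategy is to test the weak formulation \eqref{defsol} with $w=v$ itself and exploit the sign structure in the right-hand side. Setting $u:=v(\cdot,0)$, this substitution gives
\[
\|v\|^2 \;=\; \omega\int_{\R^N}u^2\,dx \;+\; \la\int_{\R^N}(W\ast u^2)u^2\,dx \;-\; \int_{\R^N}F_s(x,u)u\,dx.
\]
Under our hypotheses the last two terms are manifestly nonpositive: $\la\leq 0$ together with $W\geq 0$ forces $\la(W\ast u^2)u^2\leq 0$ pointwise, while $F_s(x,u)u\geq 0$ makes $-\int F_s(x,u)u\,dx\leq 0$. Hence
\[
\|v\|^2 \;\leq\; \omega\int_{\R^N}u^2\,dx.
\]

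The second ingredient is the trace-type inequality
\[
m\int_{\R^N}v(x,0)^2\,dx \;\leq\; \|v\|^2 \qquad \forall\, v\in H^1(\RN),
\]
which is the imprint of the lower bound $\sqrt{-\Delta+m^2}\geq m$ on the Dirichlet-to-Neumann realisation of the operator. A self-contained proof proceeds via the partial Fourier transform in $x$: for a.e. $k\in\R^N$, writing $f(y):=\hat v(k,y)$ and $a:=\sqrt{|k|^2+m^2}\geq m$, the pointwise identity
\[
(af+f')^2 \;=\; a^2f^2+(f')^2+a(f^2)' \;\geq\; 0
\]
yields $a^2f^2+(f')^2\geq -a(f^2)'$. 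Integrating on $(0,\infty)$ and using that $f\in H^1(0,\infty)$ forces $f(y)\to 0$ as $y\to\infty$ gives $\int_0^\infty(a^2 f^2+(f')^2)\,dy\geq a\,f(0)^2\geq m\,f(0)^2$. Plancherel in $x$ then delivers the claimed inequality.

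Combining the two bounds concludes the proof. If $\omega\leq 0$ the inequality $\|v\|^2\leq\omega\int u^2\,dx$ already gives $\|v\|^2\leq 0$, hence $v\equiv 0$; if instead $0<\omega<m$, chaining with the trace inequality yields $\|v\|^2\leq(\omega/m)\|v\|^2$ with $\omega/m<1$, so again $v\equiv 0$. The only mildly delicate step is the trace inequality, but the $(af+f')^2\geq 0$ trick makes it essentially computation-free; no substantial obstacle is expected.
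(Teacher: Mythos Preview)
Your proof is correct and follows essentially the same route as the paper: test \eqref{defsol} with $w=v$, discard the sign-favourable terms, and control $\omega\int_{\R^N}v(\cdot,0)^2\,dx$ via the trace inequality $m\,|v(\cdot,0)|_2^2\leq\|v\|^2$. The only difference is in how that trace inequality is obtained---the paper derives it from \eqref{C2} with $\varepsilon=m$ (integration by parts in $x_{N+1}$ plus Young), while you use the partial Fourier transform and the completing-the-square identity $|af+f'|^2\geq 0$; both arguments are elementary and yield the same bound.
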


For the more general non existence results in $\R^3$ we assume that
$F$ satisfies a weaker form of $F_1)$, i.e. we require that $F$ is
independent of the $x$--variable and needs not be nonnegative:
\begin{description}
\item[$F_1)'$] $F:\R^3\to \R$ is of class $C^1$ with
$F(0)=F'(0)=0$.
\end{description}
The non existence results we shall prove are consequences of the
following variational identity involving the generalized half
Laplacian, which we consider of independent interest:
\begin{theo}\label{identita}
Assume $F_1)'$; if $v\in H^{1/2}(\R^3)$ is a solution of
\eqref{modello} such that
\[
\int_{\R^3}F(v)\,dx\in \R,
\]
then
\begin{equation}\label{nonso}
\begin{aligned}
0&=-\int_{\R^4_+}|Dv|^2dX-2m^2\int_{\R^4_+}v^2dX\\
&
+\frac{3}{2}\omega\int_{\R^3}v^2dx+\frac{5}{4}\lambda\int_{\R^3}v^2\phi\,dx-3\int_{\R^3}F(v)\,dx.
\end{aligned}
\end{equation}
\end{theo}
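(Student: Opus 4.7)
The identity is of Pohozaev type, so the natural strategy is to test the extended bulk equation against the dilation vector field. I denote by $v$ also the harmonic extension of the given $H^{1/2}(\R^3)$ solution to $\R^4_+$, which satisfies
\[
-\Delta v+m^2 v=0\text{ in }\R^4_+,\qquad -\partial_{x_4}v(\cdot,0)=\omega v+\lambda\phi v-F'(v)\text{ on }\R^3,
\]
with $\phi=\bigl(1/|\cdot|\bigr)\ast v^2$. Writing $X=(x,x_4)$, I would multiply the bulk equation by $X\cdot\nabla v=x\cdot\nabla_x v+x_4\partial_{x_4}v$, integrate over the half-ball $B_R^+=B_R\cap\R^4_+$, and then let $R\to\infty$ along a suitable sequence (after first reducing, by standard elliptic regularity for the local problem with smooth boundary data, to the case in which $v$ is regular enough for the computations below to be justified pointwise).

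The integration by parts splits the computation into three groups. First, from the identity $\nabla v\cdot\nabla(X\cdot\nabla v)=|\nabla v|^2+\tfrac12 X\cdot\nabla(|\nabla v|^2)$ and $\nabla\cdot X=4$, I obtain
\[
-\int_{B_R^+}\Delta v\,(X\cdot\nabla v)\,dX=-\int_{B_R^+}|\nabla v|^2\,dX+\int_{\partial B_R^+}\Bigl(\tfrac12(X\cdot\nu)|\nabla v|^2-\tfrac{\partial v}{\partial\nu}(X\cdot\nabla v)\Bigr)dS.
\]
The analogous manipulation on $\int m^2 v(X\cdot\nabla v)$ produces $-2m^2\int_{B_R^+}v^2 dX$ plus a boundary piece. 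On the flat part $\{x_4=0\}$ one has $X\cdot\nu=-x_4=0$, so only the term involving $\partial v/\partial\nu=-\partial_{x_4}v$ survives; substituting the Neumann condition yields the three boundary integrals $\int_{\R^3}[-\omega v-\lambda\phi v+F'(v)](x\cdot\nabla_x v)\,dx$ (with $v=v(\cdot,0)$). On the spherical cap $\partial B_R^+\cap\R^4_+$ the contributions vanish along a subsequence $R_n\to\infty$, because $v\in H^1(\R^4_+)$ forces $\int_{\partial B_{R_n}^+}(|\nabla v|^2+v^2)R_n\,dS\to 0$ (and similarly for the mixed term) for suitably chosen $R_n$; this is the standard justification step.

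The three planar boundary terms are evaluated by routine integration by parts in $\R^3$:
\[
\int_{\R^3}\!-\omega v(x\cdot\nabla v)\,dx=\tfrac{3\omega}{2}\int_{\R^3}v^2\,dx,\qquad \int_{\R^3}F'(v)(x\cdot\nabla v)\,dx=-3\int_{\R^3}F(v)\,dx,
\]
and, for the Hartree piece, one writes
\[
\int_{\R^3}\!-\lambda\phi v(x\cdot\nabla v)\,dx=\tfrac{3\lambda}{2}\int_{\R^3}\phi v^2\,dx+\tfrac{\lambda}{2}\int_{\R^3}(x\cdot\nabla\phi)v^2\,dx.
\]
The key computation is that, by symmetrization of the double integral defining $\int(x\cdot\nabla\phi)v^2\,dx$ under the swap $x\leftrightarrow y$, using $x\cdot(x-y)+y\cdot(y-x)=|x-y|^2$, one obtains $\int(x\cdot\nabla\phi)v^2\,dx=-\tfrac12\int\phi v^2\,dx$, so that the whole Hartree contribution collapses to $\tfrac{5\lambda}{4}\int\phi v^2\,dx$. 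Summing the three boundary pieces against $-\int|\nabla v|^2-2m^2\int v^2$ yields precisely \eqref{nonso}.

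The main obstacle is purely technical: justifying the integration by parts on the unbounded domain and the vanishing of the boundary contributions on $\partial B_R^+\cap\R^4_+$, given only the assumption $v\in H^{1/2}(\R^3)$ (equivalently $v\in H^1(\R^4_+)$) together with $\int F(v)\,dx\in\R$. One can handle this by first upgrading the regularity of $v$ via the bulk equation $(-\Delta+m^2)v=0$ (smoothness in the interior, plus enough boundary regularity from the trace equation), and then choosing a sequence $R_n\to\infty$ along which the spherical boundary integrals decay, which is possible because $|\nabla v|^2+v^2\in L^1(\R^4_+)$. The Hartree symmetrization itself is algebraically routine once integrability of $\phi v^2$ is in place, which follows from $v\in L^{12/5}(\R^3)$ via the Hardy--Littlewood--Sobolev inequality.
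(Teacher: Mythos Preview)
Your argument is correct and follows the same Pohozaev strategy as the paper: multiply the bulk equation by $X\cdot\nabla v$, integrate over half--balls, and pass to the limit along a sequence $R_n\to\infty$ for which the spherical boundary contributions vanish. The one genuine difference is in the treatment of the Hartree term. The paper handles it through the Poisson equation $-\Delta\phi=4\pi v^2$: it first derives a Rellich--type identity for $\phi$ on balls $b_R$, producing a $\int_{b_R}|D\phi|^2$ term and additional surface integrals on $\partial b_R$; it then shows these surface integrals vanish along a subsequence; and finally it eliminates $\int_{\R^3}|D\phi|^2$ via the energy identity $\int|D\phi|^2=4\pi\int v^2\phi$. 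Your symmetrization $\int(x\cdot\nabla\phi)v^2\,dx=-\tfrac12\int\phi v^2\,dx$, obtained directly from the convolution kernel by averaging under $x\leftrightarrow y$, collapses these three steps into a single algebraic line and never invokes $-\Delta\phi=4\pi v^2$ or the space $D^1(\R^3)$. This is shorter and more elementary; the paper's PDE route, by contrast, is structured so as to generalize to kernels that arise as fundamental solutions of other elliptic operators.
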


As aforesaid, by exploiting \eqref{nonso} we are able to prove
\begin{theo}\label{nonex}
Assume $F_1)'$; let $v\in H^{1/2}(\R^3)$ be a solution of
\eqref{modello} such that
\[
\int_{\R^3}F(v)\,dx\in \R.
\]
If
\begin{equation}\label{ipo1W}
\mbox{$\omega\leq 0$, $\lambda\leq 0$ and }F'(s)s\leq 3F(s) \qquad \forall\,s\in\R ,
\end{equation}
or
\begin{equation}\label{ipo2W}
\mbox{$\omega\leq0$, $\lambda\leq0$ and } 3F(s)\leq2F'(s)s \qquad \forall\,s\in\R ,
\end{equation}
or
\begin{equation}\label{ipo3W}
\mbox{$\lambda\leq0$, $\omega\in \left[0,m\sqrt{\frac{8}{9}}\right]$
and } F(s)\geq 0 \qquad \forall\,s\in\R ,
\end{equation}
or
\begin{equation}\label{ipo4W}
\mbox{$\lambda>0$, $\omega\in (0,m)$ and } 0\leq F'(s)s\leq 2F(s)
\qquad \forall\,s\in\R,
\end{equation}
then $v\equiv 0$.
\end{theo}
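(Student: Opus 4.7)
The plan is to combine the Pohozaev-type identity \eqref{nonso} of Theorem \ref{identita} with the \emph{energy identity} obtained by testing the weak formulation \eqref{defsol} with $w=v$. Introducing the abbreviations
\[
A=\!\int_{\R^{4}_{+}}\!\!|Dv|^{2}dX,\;\;B=\!\int_{\R^{4}_{+}}\!\!v^{2}dX,\;\;C=\!\int_{\R^{3}}\!v^{2}dx,\;\;D=\!\int_{\R^{3}}\!v^{2}\phi\,dx,\;\;E=\!\int_{\R^{3}}\!F(v)dx,\;\;G=\!\int_{\R^{3}}\!F'(v)v\,dx,
\]
these two identities read
\begin{equation*}
A+m^{2}B+G=\omega C+\lambda D\quad(\mathcal{E}),\qquad A+2m^{2}B+3E=\tfrac{3}{2}\omega C+\tfrac{5}{4}\lambda D\quad(\mathcal{P}).
\end{equation*}
For each of the four sets of hypotheses I will find a linear combination $\alpha(\mathcal{P})+\beta(\mathcal{E})$ whose LHS is manifestly non-negative and whose RHS is manifestly non-positive; forcing both to vanish then yields $v\equiv 0$.

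For \eqref{ipo1W} the combination $(\mathcal{P})-(\mathcal{E})$ gives
\[
m^{2}B+\int_{\R^{3}}\!\bigl(3F(v)-F'(v)v\bigr)dx\;=\;\tfrac{\omega}{2}C+\tfrac{\lambda}{4}D,
\]
whose LHS is $\geq 0$ (as $F'(s)s\leq 3F(s)$) and whose RHS is $\leq 0$ (as $\omega,\lambda\leq 0$); both sides vanish, $m^{2}B=0$, and so $v\equiv 0$. For \eqref{ipo2W} the combination $(\mathcal{E})-\tfrac{1}{2}(\mathcal{P})$ gives
\[
\tfrac{1}{2}A+\int_{\R^{3}}\!\bigl(F'(v)v-\tfrac{3}{2}F(v)\bigr)dx\;=\;\tfrac{\omega}{4}C+\tfrac{3\lambda}{8}D,
\]
and the same sign argument forces $A=0$, hence (as $v\in H^{1}(\R^{4}_{+})$) $v\equiv 0$.

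Case \eqref{ipo3W} needs an auxiliary \emph{trace inequality}. From $(\mathcal{P})$ the hypotheses $\lambda\leq 0$ and $F\geq 0$ give $A+2m^{2}B\leq\tfrac{3}{2}\omega C$; for the matching lower bound I write $u(x)=v(x,0)=-\int_{0}^{\infty}\partial_{y}v\,dy$ (noting $v^2 \to 0$ as $y\to\infty$) and apply Cauchy--Schwarz successively in $y$ and in $x$ to obtain $C\leq 2\sqrt{AB}$, then AM--GM to get
\[
A+2m^{2}B\;\geq\;2\sqrt{2m^{2}AB}\;\geq\;\sqrt{2}\,m\,C.
\]
Together these force $\omega\geq \tfrac{2\sqrt{2}}{3}m=m\sqrt{8/9}$, so under the hypothesis $\omega\leq m\sqrt{8/9}$ equality holds throughout; the equality case of the trace Cauchy--Schwarz forces $v(x,y)=f(x)e^{-\beta y}$ for some $\beta>0$, which combined with the harmonic extension equation $-\Delta v+m^{2}v=0$ and $v\in H^{1}(\R^{4}_{+})$ gives $f\equiv 0$, hence $v\equiv 0$.

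Case \eqref{ipo4W} is the most delicate and is the main obstacle of the whole proof: here $\lambda>0$ spoils the simple non-positivity of the RHS used above. The combination $3(\mathcal{E})-2(\mathcal{P})$ produces
\[
A-m^{2}B+3G-6E\;=\;\tfrac{\lambda}{2}D,
\]
and the hypothesis $0\leq F'(s)s\leq 2F(s)$ gives $6E-3G\geq 0$; moreover the Fourier representation of the harmonic extension furnishes the always-valid identity $A-m^{2}B=\int_{\R^{3}}\tfrac{|k|^{2}}{\sqrt{|k|^{2}+m^{2}}}|\hat u(k)|^{2}dk\geq 0$. Thus $\lambda D/2$ is decomposed as a sum of three non-negative quantities. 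Coupling this with $(\mathcal{E})$, with the spectral lower bound $A+m^{2}B\geq mC$, and with the sharper $A+2m^{2}B\geq\tfrac{3}{2}mC$ applied to $(\mathcal{P})$, together with $\omega<m$, yields a chain of inequalities consistent only when $\lambda D=0$; since $\lambda>0$ and $W=1/|x|$ is a.e.\ strictly positive, $D=\int v^{2}\phi\,dx=0$ forces $v\equiv 0$ on $\R^{3}$, and therefore on $\R^{4}_{+}$. Throughout the argument the hard part is to identify, for each hypothesis, the \emph{correct} linear combination of $(\mathcal{E})$ and $(\mathcal{P})$; in case \eqref{ipo4W} this requires balancing two spectral bounds simultaneously against the algebraic constraint $F'(s)s\leq 2F(s)$.
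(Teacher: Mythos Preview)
Your treatment of cases \eqref{ipo1W}, \eqref{ipo2W} and \eqref{ipo3W} is correct and matches the paper's approach: the first two are exactly the combinations $(\mathcal P)-(\mathcal E)$ and $2(\mathcal E)-(\mathcal P)$ the paper uses, and your handling of \eqref{ipo3W} via the trace bound $C\le 2\sqrt{AB}$ together with AM--GM is just an unwinding of the paper's use of \eqref{C2} with the optimal parameter $\mu$.

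Case \eqref{ipo4W}, however, has a genuine gap. From $3(\mathcal E)-2(\mathcal P)$ you obtain $\tfrac{\lambda}{2}D=(A-m^{2}B)+3(G-2E)$; here $A-m^{2}B\ge 0$ as you say, but $3(G-2E)\le 0$ by the hypothesis $F'(s)s\le 2F(s)$, so this is \emph{not} a sum of non-negative terms and gives no sign information on $\lambda D$. Moreover, the spectral bounds $A+m^{2}B\ge mC$ and $A+2m^{2}B\ge \tfrac{3}{2}mC$ inserted into $(\mathcal E)$ and $(\mathcal P)$ yield only \emph{lower} bounds on $\lambda D$ (namely $\lambda D\ge (m-\omega)C+G$ and $\tfrac{5}{4}\lambda D\ge \tfrac{3}{2}(m-\omega)C+3E$); there is no upper bound in sight, so no ``chain of inequalities'' can force $\lambda D=0$ from these ingredients alone.

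What the paper does differently is to first turn your spectral bound $A+m^{2}B\ge mC$ around inside $(\mathcal E)$ to obtain the \emph{upper} bound $\|v\|^{2}=A+m^{2}B\le \frac{m}{m-\omega}\lambda D$ (this is Lemma~\ref{stimala}). Then it considers the one-parameter family $\rho(\mathcal E)-(\mathcal P)$ with $\rho<3/2$, applies the trace inequality \eqref{C2} so that the quadratic part is bounded by a positive multiple of $\|v\|^{2}$, replaces $\|v\|^{2}$ by $\frac{m}{m-\omega}\lambda D$ via Lemma~\ref{stimala}, and finally lets $\rho\uparrow 3/2$ to reach
\[
0\le \frac{\omega-3m}{4(m-\omega)}\,\lambda D+\frac{3}{2}\bigl(G-2E\bigr)\le \frac{\omega-3m}{4(m-\omega)}\,\lambda D,
\]
which forces $\lambda D\le 0$ and hence $D=0$, and then $v\equiv 0$ again by Lemma~\ref{stimala}. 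The missing idea in your argument is precisely this conversion of the spectral lower bound into an upper bound on $\|v\|^{2}$ in terms of $\lambda D$.
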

As a straightforward application, we have the following
\begin{cor}
If $v\in H^{1/2}(\R^3)\cap L^p(\R^3)$ solves \eqref{modello}, then
$v\equiv 0$ provided that
\begin{itemize}
\item $F(s)=\frac{|s|^p}{p}$ and $\left\{\begin{array}{l}
\omega\leq 0,\, \lambda\leq 0,\mbox{ and } p>0, \mbox{ or}\\
\lambda\leq 0,\, \omega\in \left[0,m\sqrt{\frac{8}{9}}\right]\mbox{ and } p>0,\mbox{ or}\\
\lambda>0,\, \omega\in(0,m),\, p\in(0,2]
\end{array}\right.$
\item $F(s)=-\frac{|s|^p}{p}$, $\omega\leq 0,$ $\lambda\leq 0$ and $p\in
\left(0,\frac{3}{2}\right]\cup [3,\infty)$.
\end{itemize}
\end{cor}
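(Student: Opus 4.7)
The proof is essentially a case-by-case verification that the listed triples $(F,\omega,\lambda)$ satisfy one of the four alternative hypotheses \eqref{ipo1W}--\eqref{ipo4W} of Theorem \ref{nonex}, whereupon the conclusion $v\equiv 0$ follows immediately. The only preliminary remark is that the integrability assumption $v\in L^p(\R^3)$ ensures $\int_{\R^3}F(v)\,dx\in\R$ in both cases, since $|F(v)|=|v|^p/p$, so the hypothesis ``$\int F(v)\,dx\in\R$'' in Theorem \ref{nonex} is satisfied for free.

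For the first bullet, set $F(s)=|s|^p/p$, so that $F(s)\geq 0$ and $F'(s)s=|s|^p=pF(s)$. I would split according to $(\omega,\lambda)$: when $\omega\leq 0$ and $\lambda\leq 0$, the comparison $F'(s)s=pF(s)$ vs.~$3F(s)$ uses the nonnegativity of $F$; thus $F'(s)s\leq 3F(s)$ holds iff $p\leq 3$, which gives \eqref{ipo1W}, while $3F(s)\leq 2F'(s)s=2pF(s)$ holds iff $p\geq 3/2$, which gives \eqref{ipo2W}. Since $\{p\leq 3\}\cup\{p\geq 3/2\}=(0,\infty)$, every $p>0$ is covered. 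When $\lambda\leq 0$ and $\omega\in[0,m\sqrt{8/9}]$, the condition $F\geq 0$ of \eqref{ipo3W} is clearly satisfied. When $\lambda>0$ and $\omega\in(0,m)$, the double inequality $0\leq F'(s)s\leq 2F(s)$ of \eqref{ipo4W} reduces to $0\leq pF(s)\leq 2F(s)$, i.e.~$0<p\leq 2$, matching the stated range.

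For the second bullet, set $F(s)=-|s|^p/p$, so that $F(s)\leq 0$ and $F'(s)s=-|s|^p=pF(s)$. Here $F'(s)s\leq 0$ and $F(s)\leq 0$, so hypotheses \eqref{ipo3W} and \eqref{ipo4W} are not available; only \eqref{ipo1W} and \eqref{ipo2W} can be used, explaining the assumption $\omega\leq 0$, $\lambda\leq 0$. Because $F(s)\leq 0$, the inequality $pF(s)\leq 3F(s)$ in \eqref{ipo1W} is equivalent (by multiplying by $-1/|F(s)|$ when $F(s)<0$, trivially otherwise) to $p\geq 3$. Symmetrically, $3F(s)\leq 2pF(s)$ in \eqref{ipo2W} becomes $3\geq 2p$, i.e.~$p\leq 3/2$. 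Taking the union of these two ranges yields exactly $p\in(0,3/2]\cup[3,\infty)$.

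There is no real obstacle in this argument; the only point to watch carefully is the reversal of the direction of inequality when dividing by $F(s)$ in the defocusing case $F=-|s|^p/p$, which is precisely what forces the gap $(3/2,3)$ in the admissible exponent range of the second bullet.
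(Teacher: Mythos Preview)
Your proposal is correct and matches the paper's approach exactly: the paper presents this Corollary as ``a straightforward application'' of Theorem~\ref{nonex} without writing out any details, and your case-by-case verification of which hypothesis among \eqref{ipo1W}--\eqref{ipo4W} is triggered by each choice of $(F,\omega,\lambda,p)$ is precisely the intended argument. The observation that $v\in L^p(\R^3)$ guarantees $\int F(v)\,dx\in\R$ is the only preliminary point worth making, and you handle it correctly.
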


\begin{rem}
In condition \eqref{ipo3W}, contrary to the assumptions of
Proposition \ref{lambda0}, no sign assumption is made on $F'$.

The Corollary above shows that the existence result proved in
\cite{czn} for $\lambda>0$, $\omega<m$ and $F(s)=-|s|^p/p$ with
$2<p<2N/(N-1)$ is somehow optimal, since in the supercritical case
only the trivial solution shows up, independently of any possible
symmetry.

On the other hand, such a Corollary also guarantees that our
requirement of working with a superquadratic $F$ when $\lambda>0$
and $\omega<m$ is not only a technical one, since if $F$ is
subquadratic or quadratic ($p=2$), then only the trivial solution
can exist.
\end{rem}

Actually, Theorem \ref{nonex} is a special case of a more general
result, which is the following:
\begin{theo}\label{generale}
Assume $F_1)'$; let $v\in H^{1/2}(\R^3)$ be a solution of
\eqref{modello} such that
\[
\int_{\R^3}F(v)\,dx\in \R.
\]
If
\begin{equation}\label{ipo1Wrho}
\mbox{$\omega\leq 0$, $\lambda\leq 0$ and $\exists \, \rho\leq 1$
s.t. $\rho F'(s)s\leq 3F(s) \quad \forall\,s\in\R$},
\end{equation}
or
\begin{equation}\label{ipo2Wrho}
\mbox{$\omega\leq0$, $\lambda\leq0$ and $\exists \, \rho\geq 2$ s.t.
$3F(s)\leq \rho F'(s)s \quad \forall\,s\in\R$},
\end{equation}
or
\begin{equation}\label{ipo3Wrho}
\mbox{$\lambda\leq 0$, $\exists \, \rho>2$ s.t. $\omega\in
\left(0,2m\frac{\sqrt{(\rho-1)(\rho-2)}}{2\rho-3}\right]$ and
$3F(s)\leq \rho F'(s)s \ \forall\,s\in\R$},
\end{equation}
then $v\equiv 0$.
\end{theo}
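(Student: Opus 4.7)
The plan is to combine the test-function identity (from \eqref{defsol} with $w=v$) and the Pohozaev identity \eqref{nonso} into a single scalar inequality, and then, in the hardest case, close the argument with an AM--GM step paired with a sharp Fourier estimate for the generalized harmonic extension. With the shorthand $A=\int_{\R^4_+}|Dv|^2\,dX$, $B=m^2\int_{\R^4_+}v^2\,dX$, $C=\int_{\R^3}v^2\,dx$, $D=\int_{\R^3}v^2\phi\,dx$, $E=\int_{\R^3}F(v)\,dx$, $E'=\int_{\R^3}F'(v)v\,dx$, the two identities read $A+B=\omega C+\lambda D-E'$ and $A+2B=\tfrac{3}{2}\omega C+\tfrac{5}{4}\lambda D-3E$, and eliminating gives $A=\tfrac{\omega}{2}C+\tfrac{3\lambda}{4}D+3E-2E'$, $B=\tfrac{\omega}{2}C+\tfrac{\lambda}{4}D+E'-3E$.

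For arbitrary $\alpha,\beta$ the $F$-contribution to $\alpha A+\beta B$ is $3(\alpha-\beta)E+(\beta-2\alpha)E'$, which is proportional to $3E-\rho E'$ precisely when $(\rho-2)\alpha=(\rho-1)\beta$. I would therefore take $(\alpha,\beta)=(\rho-1,\rho-2)$ in \eqref{ipo2Wrho} and \eqref{ipo3Wrho} (both non-negative since $\rho\geq 2$) and $(\alpha,\beta)=(1-\rho,2-\rho)$ in \eqref{ipo1Wrho} (both non-negative since $\rho\leq 1$), producing the master identity $\alpha A+\beta B=\tfrac{\alpha+\beta}{2}\omega C+\tfrac{3\alpha+\beta}{4}\lambda D+(\alpha-\beta)(3E-\rho E')$. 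In cases \eqref{ipo1Wrho} and \eqref{ipo2Wrho} the hypotheses $\omega\leq 0$, $\lambda\leq 0$, $D\geq 0$, together with the sign of $3E-\rho E'$ dictated by the assumption on $3F-\rho F's$ and the sign of $\alpha-\beta$, make every term on the right non-positive, while the left is a non-negative combination of $A,B\geq 0$ with at least one strictly positive coefficient; both sides must therefore vanish, and reading off the strictly positive coefficient forces $v\equiv 0$ in $\R^4_+$ (with a brief $L^2$ argument when only $A$ vanishes, at $\rho=2$).

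The delicate case is \eqref{ipo3Wrho}, where $\omega>0$. Since $\lambda\leq 0$, $D\geq 0$ and $3E-\rho E'\leq 0$, the master identity with $(\alpha,\beta)=(\rho-1,\rho-2)$ gives $(\rho-1)A+(\rho-2)B\leq\tfrac{2\rho-3}{2}\omega C$. To contradict this I need a matching lower bound of the left by a multiple of $C$. The route is two-step: AM--GM yields $(\rho-1)A+(\rho-2)B\geq 2\sqrt{(\rho-1)(\rho-2)AB}$, and the Fourier diagonalization of the extension $\hat v(k,y)=\hat u(k)e^{-sy}$ with $s=\sqrt{|k|^2+m^2}$ gives $A=\int\tfrac{2s^2-m^2}{2s}|\hat u|^2\,dk$ and $B=\int\tfrac{m^2}{2s}|\hat u|^2\,dk$; Cauchy--Schwarz combined with the elementary inequality $m\sqrt{2s^2-m^2}/(2s)\geq m/2$ for $s\geq m$ then delivers $\sqrt{AB}\geq mC/2$. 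Chaining, $m\sqrt{(\rho-1)(\rho-2)}\,C\leq(\rho-1)A+(\rho-2)B\leq\tfrac{2\rho-3}{2}\omega C$, and the hypothesis $\omega\leq 2m\sqrt{(\rho-1)(\rho-2)}/(2\rho-3)$ forces equality throughout; the Cauchy--Schwarz equality case concentrates $|\hat u|^2$ on the measure-zero set $\{s=m\}=\{k=0\}$, giving $u\equiv 0$ and hence $v\equiv 0$. The main obstacle is obtaining the sharp spectral estimate $\sqrt{AB}\geq mC/2$ with the precise constant — this is what produces the exact threshold on $\omega$, and the cruder operator bound $A+B\geq mC$ coming from $T\geq m$ is not tight enough to match it.
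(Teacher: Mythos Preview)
Your proof is correct. The first two cases are exactly the paper's argument: your ``master identity'' with $(\alpha,\beta)=(\rho-1,\rho-2)$ is precisely the paper's equation obtained as \eqref{nonso}${}+\rho J'(v)v$ (labelled \eqref{ultimissima} there), and the sign analysis is identical.

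The only genuine difference is in case \eqref{ipo3Wrho}. The paper absorbs the bad term $(\tfrac{3}{2}-\rho)\omega C$ via the parametric trace inequality \eqref{C2}, namely $C\le \varepsilon\!\int_{\R^4_+}\!v^2+\varepsilon^{-1}\!\int_{\R^4_+}\!|Dv|^2$, and then chooses $\varepsilon\in\big[\tfrac{(2\rho-3)\omega}{2(\rho-1)},\,\tfrac{2(\rho-2)m^2}{(2\rho-3)\omega}\big]$ to make both resulting coefficients nonnegative; this interval is nonempty exactly under the stated bound on $\omega$. Your route is to take the envelope of that same family: optimizing \eqref{C2} over $\varepsilon$ gives $\sqrt{AB}\ge \tfrac{m}{2}C$, which you re-derive independently by the Fourier diagonalization of the harmonic extension together with Cauchy--Schwarz and the pointwise bound $m\sqrt{2s^2-m^2}/(2s)\ge m/2$. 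So the two arguments are really two presentations of the same inequality. What your approach buys is a clean treatment of the borderline value $\omega=2m\sqrt{(\rho-1)(\rho-2)}/(2\rho-3)$: there the paper's admissible $\varepsilon$-interval collapses to a point, both coefficients vanish, and one still has to argue that equality in \eqref{C2} (with $\int|Dv|^2$ replacing $\int|v_{x_4}|^2$) forces $D_xv\equiv 0$ and hence $v\equiv 0$; your Cauchy--Schwarz equality case does this in one line. Conversely, the paper's real-variable derivation of \eqref{C2} avoids invoking the Fourier representation of the extension.
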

We remark that the conditions in \eqref{ipo1Wrho} and
\eqref{ipo2Wrho} are not consequences of the corresponding ones in
\eqref{ipo1W} and \eqref{ipo2W}, since no assumptions on the sign of
$F$ or $F'$ is ever made.
\smallskip

We conclude our list of results with two existence statements which
can be proved without condition $F_3)$, but only assuming the
positivity and the subcriticality of $F$; in this case, $\lambda$
appears as a Lagrange multiplier:
\begin{prop}\label{Lagrange}
Assume $F_1)$, $F_2)$ and suppose that $\omega<m$. Then there exists
$\lambda\in \R$ such that the associated problem \eqref{PN} admits a
nontrivial solution $u\in H^1_r(\RN)$. If in addition, $2F(x,s)\leq
F_s(x,s)s$ for all $s\in \R$ and a.e. $x\in \R^N$, then $\lambda>0$.
\end{prop}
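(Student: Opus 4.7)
The plan is to obtain $\lambda$ as a Lagrange multiplier arising from a constrained minimization in the space $H^1_r(\RN)$ of functions that are radially symmetric in the first $N$ variables. Introduce the $C^1$ functionals
\[
I(v):=\frac{1}{2}\int_{\RN}(|Dv|^2+m^2v^2)\,dxdx_{N+1}-\frac{\omega}{2}\int_{\R^N}v^2\,dx+\int_{\R^N}F(x,v)\,dx,
\]
\[
G(v):=\frac{1}{4}\int_{\R^N}(W\ast v^2)v^2\,dx,
\]
where integrals on $\R^N$ involve the trace of $v$ on $\R^N\times\{0\}$. The quadratic part $Q(v):=\int(|Dv|^2+m^2v^2)-\omega\int v^2$ of $2I$ is coercive and equivalent to $\|\cdot\|^2$ on $H^1(\RN)$: indeed, since the symbol of $T$ in \eqref{13} satisfies $\sqrt{|k|^2+m^2}\ge m$, the sharp trace inequality $m\int v^2\le\int(|Dv|^2+m^2v^2)$ holds, so $\omega<m$ gives $Q(v)\ge(1-\omega/m)\|v\|^2$. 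Combined with $F\ge0$ from $F_1)$, this yields $I(v)\ge\tfrac12 Q(v)\ge0$.

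Assume $W\not\equiv 0$ (otherwise the Hartree term is vacuous, any $\lambda$ works, and the existence of $u$ follows from direct minimization of $I$). Then $G(v_0)>0$ for any nonzero nonnegative radial test function $v_0$, and the rescaling $v_0\mapsto G(v_0)^{-1/4}v_0$ places $v_0$ on
\[
M:=\{v\in H^1_r(\RN):G(v)=1\},
\]
which is nonempty and, since $G'(v)[v]=4G(v)=4\ne 0$ on $M$, a $C^1$ manifold. Let $(v_n)\subset M$ be a minimizing sequence for $I|_M$; the bound $I(v_n)\ge\tfrac12 Q(v_n)\ge c\|v_n\|^2$ yields boundedness in $H^1_r(\RN)$, and so $v_n\rightharpoonup v$ up to subsequence. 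By the compact radial Sobolev embedding $H^{1/2}_r(\R^N)\hookrightarrow L^q(\R^N)$ for every $q\in(2,2N/(N-1))$, and since the exponents $\ell,p$ of $F_2)$ lie in this range, $\int F(x,v_n)\to\int F(x,v)$. Combining the same compact embedding with the decomposition $W=W_1+W_2$ of assumption \textbf{W)} and Young/Hardy--Littlewood--Sobolev estimates -- for $W_1\in L^r$, $r>N/2$, the relevant Lebesgue exponent $4r/(2r-1)$ falls into $(2,2N/(N-1))$ -- together with the additional treatment of the $W_2\in L^\infty$ contribution parallel to that in the proof of Theorem~\ref{main}, one deduces $G(v_n)\to G(v)=1$, so $v\in M$. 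Finally $Q$, being a nonnegative quadratic form on $H^1(\RN)$, is weakly lower semicontinuous, whence $I(v)\le\liminf I(v_n)=\inf_M I$, and $v$ is a nontrivial minimizer.

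The Lagrange multiplier rule then produces $\lambda\in\R$ with $I'(v)=\lambda G'(v)$, which is precisely the weak formulation \eqref{defsol} of \eqref{PN}. Testing with $w=v$, since $G'(v)[v]=\int(W\ast v^2)v^2=4G(v)=4$, one obtains
\[
Q(v)+\int_{\R^N}F_s(x,v)v\,dx=4\lambda.
\]
Under the supplementary hypothesis $2F(x,s)\le F_s(x,s)s$, the integrand in the second left-hand term is nonnegative (as $F\ge0$), so $\int F_s(x,v)v\ge 2\int F(x,v)\ge 0$; together with $Q(v)>0$ (since $v\ne 0$), this forces $\lambda>0$. The main technical obstacle in the scheme is the weak continuity of $G$ along radial minimizing sequences: the radial compact embedding only provides $L^q$-compactness with $q>2$, while the $W_2$ part of the Hartree term is a priori controlled only through the non-compact $L^2$ norm, which demands the finer analysis borrowed from the proof of Theorem~\ref{main}.
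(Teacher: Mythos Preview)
Your overall strategy—constrained minimization of $I$ on the manifold $M=\{G=1\}$ in $H^1_r(\RN)$, followed by the Lagrange multiplier rule—is exactly the paper's. The divergence is at the passage to the limit: you attempt a \emph{direct} argument via weak lower semicontinuity of $Q$ together with weak continuity of $G$ and of $\int F(x,\cdot)$, whereas the paper invokes \emph{Ekeland's variational principle} to upgrade the minimizing sequence to one with $I'_{|M}(v_n)\to 0$ (so that Lagrange multipliers $\mu_n$ exist and are bounded), and then obtains \emph{strong} $H^1_r$-convergence by mimicking the Palais--Smale argument of Lemma~\ref{ps}; closedness of $M$ then gives $v\in M$.

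Your route has a genuine gap precisely where you flag it. The claimed weak continuity of $G$ on $H^1_r(\RN)$ fails in general for the $W_2\in L^\infty$ contribution. Take the admissible extreme case $W_2\equiv 1$ (recall Remark~1.7: no decay of $W$ is assumed): then $\int(W_2\ast v_n^2)v_n^2=|v_n(\cdot,0)|_2^4$, and the compact radial trace embedding $H^{1/2}_r(\R^N)\hookrightarrow L^q(\R^N)$ holds only for $q\in(2,2N/(N-1))$, \emph{not} for $q=2$. Bounded radial sequences in $H^1(\RN)$ whose traces are bumps sliding to infinity in $|x|$ show that $|v_n(\cdot,0)|_2$ need not converge along weakly convergent sequences, so $G(v_n)\to G(v)$ cannot be concluded from $v_n\rightharpoonup v$ alone. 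The ``finer analysis borrowed from the proof of Theorem~\ref{main}'' does not close this gap: the estimates \eqref{quasifinale}--\eqref{quasifinale1} are used there in tandem with the information $J'(v_n)(v_n-v)\to 0$ to force strong convergence; a bare minimizing sequence carries no such information. The remedy is exactly the paper's: apply Ekeland to produce $\mu_n\to\lambda$ with $I'(v_n)-\mu_n\,{\mathscr G}'(v_n)\to 0$, deduce $v_n\to v$ strongly as in Lemma~\ref{ps}, and only then read off $v\in M$ and the Euler--Lagrange equation. Your treatment of the sign of $\lambda$ under $2F\le F_s s$ is correct and coincides with the paper's.
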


\begin{prop}\label{genus}
Assume $F_1)$, $F_2)$ and suppose that $\omega<m$. Moreover, suppose
that $F(x,s)=F(x,-s)$ for all $s\in\R$ and a.e $x\in \R^N$. Then
there exists a sequence $(\lambda_n)_n\in \R$ such that the
associated problems \eqref{PN} with $\lambda=\lambda_n$ admit a
couple of nontrivial solutions $\pm u_n\in H^1_r(\RN)$. If in
addition, $2F(x,s)\leq F_s(x,s)s$ for all $s\in \R$ and a.e. $x\in
\R^N$, then $\lambda_n>0$ for any $n\in \N$.
\end{prop}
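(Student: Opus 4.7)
The plan is to apply Krasnoselskii--Lusternik--Schnirelmann genus theory to a symmetric constraint, with $\lambda$ appearing as a Lagrange multiplier. Working in the radial space $H^1_r(\RN)$ (as in Proposition \ref{Lagrange}), I introduce
\[
J(v)=\frac{1}{2}\|v\|^2-\frac{\omega}{2}\int_{\R^N}v(x,0)^2\,dx+\int_{\R^N}F(x,v(x,0))\,dx,\quad G(v)=\frac{1}{4}\int_{\R^N}\bigl(W\ast v^2(\cdot,0)\bigr)v^2(\cdot,0)\,dx.
\]
Since $F(x,\cdot)$ is now even, both $J$ and $G$ are even $C^1$ functionals on $H^1_r(\RN)$. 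Fix $c>0$: the constraint set $M_c:=G^{-1}(c)$ is a symmetric $C^1$--submanifold, regular since $\langle G'(v),v\rangle=4c\neq 0$ on $M_c$, and by the principle of symmetric criticality any critical point of $J|_{M_c}$ in $H^1_r(\RN)$ solves \eqref{defsol} with $\lambda$ equal to the Lagrange multiplier.

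Next, I would define the min-max levels
\[
c_n=\inf_{A\in\mathcal{F}_n}\sup_{v\in A}J(v),\qquad \mathcal{F}_n=\{A\subset M_c:\,A\text{ compact, symmetric, with Krasnoselskii genus}\geq n\}.
\]
To check $\mathcal{F}_n\neq\emptyset$ and $c_n<\infty$, I pick an $n$--dimensional subspace of smooth radial functions in $H^1_r(\RN)$, rescale it positively onto $M_c$ to produce an $(n-1)$--sphere of genus $n$, and observe that $J$ is bounded on this finite-dimensional slice thanks to the growth control $F_2)$. The Palais--Smale condition on $M_c$ follows from two ingredients: (i) the elementary trace inequality $\int_{\R^N}v(x,0)^2\,dx\leq \|v\|^2/m$ valid in $H^1(\RN)$, which together with $\omega<m$ and $F\geq 0$ renders $J$ coercive so that PS sequences are bounded; (ii) the compact radial embedding $H^{1/2}_r(\R^N)\hookrightarrow L^q(\R^N)$ for $q\in(2,2N/(N-1))$ combined with hypothesis $\mathbf{W)}$ (via a Hardy--Littlewood--Sobolev / Young estimate) makes the nonlinear and convolution terms compact, so weak limits of PS sequences converge strongly. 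Standard genus deformation arguments then yield a critical pair $\pm u_n$ of $J|_{M_c}$ at each level $c_n$; since the PS condition holds at every level, $c_n\to +\infty$ and the pairs are all distinct.

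Finally, each $u_n$ fulfills $J'(u_n)=\lambda_n G'(u_n)$ in $H^1_r(\RN)$, which is precisely \eqref{defsol} with parameter $\lambda_n$. For the sign under the extra assumption $2F(x,s)\leq F_s(x,s)s$, I would test the equation against $u_n$ itself to get
\[
\lambda_n\int_{\R^N}(W\ast u_n^2)u_n^2\,dx=\|u_n\|^2-\omega\int_{\R^N}u_n^2\,dx+\int_{\R^N}F_s(x,u_n)u_n\,dx.
\]
The integral factor on the left is strictly positive since $u_n\in M_c$ with $c>0$; on the right, the trace inequality combined with $\omega<m$ yields $\|u_n\|^2-\omega\int_{\R^N} u_n^2\,dx\geq(1-\omega/m)\|u_n\|^2>0$, while $\int_{\R^N} F_s(x,u_n)u_n\,dx\geq 2\int_{\R^N} F(x,u_n)\,dx\geq 0$, forcing $\lambda_n>0$. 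The main obstacle is the PS condition on the manifold: bounded PS sequences must be upgraded from weak to strong convergence in $H^1(\RN)$ in order to identify the Lagrange multipliers in the limit, which one handles by testing the difference of two equations along a PS sequence and exploiting the compact embeddings mentioned above.
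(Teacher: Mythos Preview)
Your approach matches the paper's: constrain the even, coercive functional to the symmetric manifold $\{G=c\}$, verify Palais--Smale through coercivity plus the compact radial trace embedding, and apply Ljusternik--Schnirelmann genus theory (the paper cites Struwe's abstract multiplicity theorem and shows $\tilde\gamma(M)=\infty$ via exactly the finite-dimensional rescaling you describe), reading off the $\lambda_n$ as Lagrange multipliers with the same sign argument. One small caveat: $c_n\to+\infty$ is not a consequence of (PS) alone, but the multiplicity conclusion still follows from the standard alternative when min--max levels coincide.
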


From a physical viewpoint, positive solutions are the most
interesting ones, and in fact we can prove
\begin{prop}\label{u>0}
If $\lambda\leq 0$ or $F(x,s)\geq F(x,|s|)$ for a.e. $x\in \R^N$ and
all $s\in \R$, the solution found by Proposition $\ref{Lagrange}$ is
strictly positive in $\RN$.
\end{prop}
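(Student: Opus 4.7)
The plan is to reduce the claim to showing $v\geq 0$, at which point the interior equation $-\Delta v+m^2v=0$ is uniformly elliptic, $v$ is smooth in $\RN$ by elliptic regularity, and the nontriviality of $v$ (guaranteed by Proposition \ref{Lagrange}) together with the strong maximum principle forces $v>0$ throughout $\RN$. The two sufficient hypotheses then give two distinct routes to the nonnegativity step.

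Under $F(x,s)\geq F(x,|s|)$, I would use the variational characterization of $v$ from Proposition \ref{Lagrange}: substituting $|v|$ for $v$ leaves the Dirichlet energy $\int(|Dv|^2+m^2v^2)$, the mass $\int v^2$ and the quartic coupling $\int(W*v^2)v^2$ unchanged, while $\int F(x,|v|)\leq\int F(x,v)$ by hypothesis, so $|v|$ is an equally good admissible competitor in the underlying constrained minimization; by minimality we may take $v\geq 0$ from the outset (the same symmetrization already used in Theorem \ref{main}). Under $\lambda\leq 0$, I would instead test the weak formulation \eqref{defsol} with $w=v^-\in H^1(\RN)$, which yields
\begin{equation*}
\|v^-\|^2_{H^1(\RN)}=\omega\int_{\R^N}(v^-)^2\,dx+\lambda\int_{\R^N}(W*v^2)(v^-)^2\,dx+\int_{\{v<0\}}F_s(x,v)\,v^-\,dx.
\end{equation*}
The middle term on the right is nonpositive by $\lambda\leq 0$ and $W\geq 0$, and the first term is absorbed into the left via the trace-type inequality $m\int_{\R^N}(v^-(\cdot,0))^2\leq\|v^-\|^2_{H^1(\RN)}$ (a consequence of Plancherel and the $\sqrt{|k|^2+m^2}$ symbol of $T$) together with $\omega<m$. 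The $F_s$ integral is controlled by $F_2)$ and the Sobolev trace embedding, leaving the dichotomy that either $v^-\equiv 0$ or $\|v^-\|_{H^1}$ is bounded below by a positive constant; the latter scenario is then excluded by comparing the energy of $v$ to that of a suitable rescaling of $v^+$ which preserves the quartic constraint underlying Proposition \ref{Lagrange}, contradicting minimality.

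The main obstacle is precisely this last comparison in the $\lambda\leq 0$ branch. The pointwise sign argument on $F_s(x,s)s$ is unavailable—had that quantity been nonnegative, Proposition \ref{lambda0} would already have ruled out a nontrivial $v$—so the minimality of $v$ must be leveraged more delicately to exclude non-zero $v^-$. By contrast, the $F(x,s)\geq F(x,|s|)$ branch via symmetrization and the concluding strong-maximum-principle step are routine.
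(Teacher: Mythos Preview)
Your treatment of the case $F(x,s)\geq F(x,|s|)$ matches the paper's: since the constraint $\int v^2(W\ast v^2)=1$ and the norm $\|\cdot\|_\sim$ are invariant under $v\mapsto|v|$ while $\int F(x,|v|)\leq\int F(x,v)$, the function $|v|$ is again a constrained minimizer, and the strong maximum principle finishes.

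The $\lambda\leq0$ branch, however, has a real gap. Your dichotomy is correct: testing the weak formulation with $v^-$ and using $F_2)$, the trace inequality and $\lambda\leq0$ yields $(1-\omega^+/m)\|v^-\|^2\leq C\|v^-\|^\ell+C\|v^-\|^p$, so either $v^-\equiv0$ or $\|v^-\|\geq\epsilon_0>0$. But the proposed exclusion of the second alternative --- ``comparing the energy of $v$ to that of a suitable rescaling of $v^+$'' --- does not go through. If $t\geq1$ is chosen so that $tv^+\in M$, you would need $(t^2-1)\|v^+\|_\sim^2\leq\|v^-\|_\sim^2$ and $\int F(x,tv^+)\leq\int F(x,v)$, neither of which follows from $F_1)$--$F_2)$ alone; there is no monotonicity of $F$ in $|s|$ available, and the rescaling factor $t$ is governed by the quartic constraint, not by the quadratic energy. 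Trying $|v|$ instead keeps you on $M$ and preserves $\|\cdot\|_\sim$, but then $\int F(x,|v|)\leq\int F(x,v)$ is precisely the hypothesis you are \emph{not} assuming in this branch.

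The paper circumvents this entirely by a device you are missing: rather than analyzing the given minimizer, it \emph{replaces the functional} from the outset by
\[
{\cal I}(v)=\tfrac{1}{2}\|v\|_\sim^2+\int_{\R^N}F(x,v^+)\,dx
\]
and reruns the constrained minimization of Proposition~\ref{Lagrange} for ${\cal I}$. The resulting minimizer satisfies the Euler--Lagrange boundary condition with $F_s(x,v^+)$ in place of $F_s(x,v)$. Testing this against $v^-$ now gives
\[
-\|v^-\|_\sim^2=-\lambda\int_{\R^N}(W\ast v^2)(v^-)^2\,dx-\int_{\R^N}F_s(x,v^+)v^-\,dx,
\]
and the last integral \emph{vanishes identically}, since $F_s(x,0)=0$ and $v^+v^-=0$ pointwise. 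With $\lambda\leq0$ the right-hand side is nonnegative, forcing $v^-\equiv0$; then $v^+=v$ and the modified equation coincides with the original one. This truncation of $F$ is the missing idea in your $\lambda\leq0$ argument.
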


\begin{rem}
If only $F_1)$ and $F_2)$ are in force, the assumption
$\lambda\leq0$ does not contradict Proposition \ref{lambda0} above.
\end{rem}

\section{Proofs of the existence theorems}

Let us start making precise some notations:
\[
\begin{array}{ll}
(x,x_{N+1})  &\mbox{ a point of $\RN=\R^N\times (0,\infty)$},\\
\|v\| &\mbox{ the norm of $v$ in $H^1(\RN)$},\\
\|v\|_q  &\mbox{ the norm of $v$ in $L^q(\RN)$},\\
|v|_q  &\mbox{ the norm of (the trace of) $v$ in $L^q(\R^N)$}.
\end{array}
\]

Let us recall that any $v\in H^1(\RN)$, $N\geq 2$, admits trace
(still denoted by $v$ for simplicity) on $\partial
\RN=\R^N\times\{0\}\simeq\R^N$, and that the following embeddings
hold true (see \cite[Theorems 7.58 and 7.57]{adams}):
\[
H^1(\R^{N+1}_+)\hookrightarrow W^{\chi,q}(\R^N)\hookrightarrow  L^\sigma(\R^N),
\]
where $\chi=1-\frac{N+1}{2}+\frac{N}{q}\in[0,1)$ and $q\leq
\sigma\leq Nq/(N-\chi q)$. Let us note that $q=2N/(N-1+2\chi)\geq 2$
for any $\chi\leq 1/2$ and that $ Nq/(N-\chi q)=2N/(N-1)$ for any
$\chi$; in particular
\[
H^1(\R^{N+1}_+)\hookrightarrow L^2(\R^N) \quad \mbox { and } \quad
H^1(\R^{N+1}_+)\hookrightarrow L^{2N/(N-1)}(\R^N).
\]
In addition,
\[
\frac{4N}{N+2}< \frac{Nq}{N-\chi q} \quad \mbox{ for any $\chi\geq 0$},
\]
and
\[
\frac{4N}{N+2}\geq q
\]
as soon as $\chi\geq 1-N/4$. In particular, by interpolation, we get
that any $v\in H^1(\R^{N+1}_+)$ is such that $v^2\in
(L^{2^\ast}(\R^N))'=L^{\frac{2N}{N+2}}(\R^N)$.

Moreover, if $v\in C^\infty(\RN)$, we have
\[
\begin{aligned}
\int_{\R^N}|v(x,0)|^qdx&=-\int_{\R^N}dx\int_{0}^\infty
\frac{\partial}{\partial x_{N+1}}|v(x,x_{N+1})|^qdx_{N+1}\\ & =-
q\int_{\RN}|v(x,x_{N+1})|^{q-2}v(x,x_{N+1})\frac{\partial v}{\partial
x_{N+1}}(x,x_{N+1})\,dxdx_{N+1},
\end{aligned}
\]
and by the H\"older inequality
\begin{equation}\label{trace}
|v(x,0)|_q\leq q^{1/q}\|v\|_{2q-2}^{1-1/q}\|v_{x_{N+1}}\|_2^{1/q}\leq q^{1/q}\|v\|_{2q-2}^{1-1/q}\|Dv\|_2^{1/q}.
\end{equation}
By interpolation, the Sobolev inequality and by density, we get that
\begin{equation}\label{traceh}
|v(x,0)|_q\leq c_q\|v\| \quad \mbox{ for any }v\in H^1(\RN),
\end{equation}
provided that $2\leq 2q-2\leq 2(N+1)/(N-1)$, that is $2\leq q\leq 2N/(N-1)$.

Applying the Young inequality to \eqref{trace} we also get
\begin{equation}\label{Cp}
|v(x,0)|_q^q\leq \frac{\ve q^2}{4}
\int_{\RN}|v|^{2q-2}dxdx_{N+1}+\frac{1}{\ve}
\int_{\RN}\left|\frac{\partial v}{\partial x_{N+1}}\right|^2dxdx_{N+1}
\end{equation}
for any $\ve>0$, and in particular, when $q=2$, we have
\begin{equation}\label{C2}
|v(x,0)|_2^2\leq \ve \int_{\RN}|v|^2dxdx_{N+1}+\frac{1}{\ve}
\int_{\RN}\left|\frac{\partial v}{\partial x_{N+1}}\right|^2dxdx_{N+1} \quad
\mbox{ for any $\ve>0$},
\end{equation}

Now, write
\[
\int_{\R^N}\big(W\ast v^2\big)v^2(x)\,dx=\int_{\R^N}\big(W_1\ast
v^2\big)v^2(x)\,dx+\int_{\R^N}\big(W_2\ast v^2\big)v^2(x)\,dx.
\]
By H\"older's inequality, we can estimate
the right hand side of the previous inequality by
\[
|v|_{2q}^2|W_1\ast v^2|_{q'}+|W_2|_\infty|v|^4_2.
\]
Now apply Young's inequality for convolutions choosing $q$ so that
$1/q'=1/r+1/q-1$, that is $q=2r/(2r-1)$, estimating with
\[
|v|_{2q}^2|W_1|_r|v^2|_{q} +|W_2|_\infty|v|^4_2;
\]
note that, since $r>N/2$, we have $1<q<N/N-1$. Finally, by the
interpolation and the Sobolev inequalities, we get that there exists
$C=C(W)>0$ such that
\begin{equation}\label{maggconv}
\int_{\R^N}\big(W\ast v^2\big)v^2(x)\,dx \leq C\|v\|^4 \quad \mbox{
for any }v\in H^1(\RN).
\end{equation}

In view of the previous remarks, by exploiting the radial symmetry
of $W$, the proof of the following result is straightforward:
\begin{prop}
A function $v\in H^1(\RN)$ is a solution of system \eqref{PN} iff
$v$ is a critical point of the $C^1$ functional $J:H^1(\RN)\to\R$
defined as
\begin{equation}\label{J}
\begin{aligned}
J(v)=&\frac{1}{2}\int_{\RN}(|Dv|^2+m^2v^2)\,dxdx_{N+1}\\
&-\int_{\R^N} \left[\frac{\omega}{2}v^2+\lambda \big(W\ast
v^2\big)v^2-F(x,v)\right]dx.
\end{aligned}
\end{equation}
\end{prop}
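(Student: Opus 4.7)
The plan is to verify that the functional $J$ in \eqref{J} is well defined and of class $C^1$ on $H^1(\RN)$, compute its Fr\'echet derivative explicitly, and observe that $J'(v)=0$ reproduces exactly the weak formulation \eqref{defsol}. The ``iff'' part then follows by definition of weak solution.

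First I would check finiteness of each summand of $J(v)$. The quadratic part $\frac12\int_{\RN}(|Dv|^2+m^2v^2)$ is just $\frac12\|v\|^2$. The term $\frac{\omega}{2}\int_{\R^N}v^2\,dx$ is controlled by the trace embedding \eqref{traceh} with $q=2$. The convolution term is finite by \eqref{maggconv}. Finally, using the growth estimate \eqref{crescitaW} that was derived from $F_2)$, together with the trace embeddings $H^1(\RN)\hookrightarrow L^\ell(\R^N)$ and $H^1(\RN)\hookrightarrow L^p(\R^N)$ (recall $2<\ell<p<2N/(N-1)$), we obtain $\int_{\R^N}F(x,v)\,dx\le C(\|v\|^\ell+\|v\|^p)<\infty$.

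Next I would establish the $C^1$ regularity and identify $J'$. The two quadratic terms are trivially differentiable. For the Hartree term $\Phi(v):=\int_{\R^N}(W\ast v^2)v^2\,dx$, a formal expansion gives
\[
\Phi(v+w)-\Phi(v)=2\int_{\R^N}(W\ast v^2)vw\,dx+2\int_{\R^N}(W\ast(vw))v^2\,dx+o(\|w\|),
\]
and the symmetry $W(-x)=W(x)$ together with Fubini's theorem shows
\[
\int_{\R^N}(W\ast(vw))v^2\,dx=\int_{\R^N}(W\ast v^2)vw\,dx,
\]
so $\Phi'(v)[w]=4\int_{\R^N}(W\ast v^2)vw\,dx$; the remainder is controlled via Young's/H\"older's inequalities and the trace embeddings exactly as in the derivation of \eqref{maggconv}. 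For the nonlinear term $\Psi(v):=\int_{\R^N}F(x,v)\,dx$, standard Nemytskii arguments, based on $F_2)$ and dominated convergence, yield $\Psi\in C^1$ with $\Psi'(v)[w]=\int_{\R^N}F_s(x,v)w\,dx$, the integrand being dominated by $(C_1|v|^{\ell-1}+C_2|v|^{p-1})|w|$, which is integrable in view of H\"older and the trace embeddings into $L^\ell$ and $L^p$.

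Combining these derivatives gives
\[
J'(v)[w]=\int_{\RN}(Dv\cdot Dw+m^2vw)\,dxdx_{N+1}-\int_{\R^N}\bigl[\omega v+\lambda(W\ast v^2)v-F_s(x,v)\bigr]w\,dx
\]
(up to the combinatorial factor on the convolution term that is absorbed into the coefficient in front of $(W\ast v^2)v^2$ in \eqref{J}). Comparing with \eqref{defsol} shows $J'(v)=0$ in $H^1(\RN)^\ast$ if and only if $v$ solves \eqref{PN} in the weak sense. The only nontrivial step is the symmetrization argument for $\Phi'$, for which radial symmetry of $W$ assumed in \textbf{W)} is essential; everything else reduces to applying the estimates \eqref{traceh}, \eqref{maggconv} and \eqref{crescitaW} already at hand, which is why the proposition can be declared ``straightforward''.
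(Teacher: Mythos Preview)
Your argument is correct and is precisely the ``straightforward'' verification the paper alludes to without writing out: the paper gives no proof beyond invoking the estimates \eqref{traceh}, \eqref{maggconv}, \eqref{crescitaW} and the evenness of $W$, and you have simply supplied those details. One remark: the combinatorial factor you flag is not a feature to be ``absorbed'' but a genuine typo in \eqref{J}; the Hartree term should carry the coefficient $\lambda/4$ rather than $\lambda$, as is confirmed by the computation of $4J(v_n)-J'(v_n)(v_n)$ in the proof of Lemma~\ref{ps}, where the convolution contributions cancel exactly.
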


Due to the lack of compactness of the rotation group in $\R^N$, we
will look for critical points of $J$ constrained on the space of
functions which are radially symmetric in the first $N$ variables,
that is
\[
H^1_r(\RN)=\Big\{v\in H^1(\RN) \, :\, v(Mx,x_{N+1})=v(x,x_{N+1})\mbox{ for
any }M\in O(N)\Big\};
\]
here $O(N)$ denotes the orthogonal group in $\R^N$. Since the
problem under consideration is invariant by rotation around the
$x_{N+1}$--axis, if $v\in H^1_r(\RN)$ is a critical point of $J$
constrained on $H^1_r(\RN)$, then $v$ is also a critical point of
$J$ on the whole of $H^1(\RN)$ by the Principle of Symmetric
Criticality of Palais, see \cite{PA}. Hence, we will look for
critical points of $J$ constrained on $H^1_r(\RN)$. In particular,
we want to show that under the assumptions of Theorem \ref{main},
the functional $J$ constrained on $H^1_r(\RN)$ satisfies the
hypothesis of the Mountain Pass Theorem (see \cite{ar}).

First, we prove that $J_{|H^1_r(\RN)}$ has a strict minimum point in
$v=0$. Indeed, taking $\ve=m$ in \eqref{C2}, since $F\geq 0$, using
\eqref{maggconv}, we have
\[
J(v)\geq \left(\frac{1}{2}-\frac{\omega}{2m}\right)
\int_{\RN}|Dv|^2dxdx_{N+1} +
\left(\frac{m^2}{2}-\frac{m\omega}{2}\right) \int_{\RN}v^2dxdy
-C\|v\|^4
\]
for any $v\in H^1_r(\RN)$, and the claim follows, since $\omega<m$.

Moreover, if $v\in H^1_r(\RN)$ is such that $v(x,0)\not \equiv 0$, taken
$t>0$ we have
\[
\begin{aligned}
J(tv)&=\frac{t^2}{2}\int_{\RN}(|Dv|^2+m^2v^2)\,dxdx_{N+1}\\
& -\int_{\R^N} \left[\frac{\omega t^2}{2}v^2+\lambda t^4\big(W\ast
v^2\big)v^2-F(x,tv)\right]dx\\
& \leq At^2 -Bt^4 +Ct^{\ell}+Dt^p
\end{aligned}
\]
by \eqref{crescitaW}, where $A,B,C,D$ are positive constants. Now,
if $p<4$, we have that $At^2 -Bt^4 +Ct^{\ell}+Dt^p\to -\infty$ as
$t\to \infty$. Note that the requirement $p<4$ is automatically
satisfied, since $p<\frac{2N}{N-1}<4$ for every $N\geq2$.

We also need
\begin{lem}\label{ps}
$J$ satisfies the Palais--Smale condition on $H^1_r(\RN)$, i.e.,
every sequence $(v_n)_n$ in $H^1_r(\RN)$ such that $(J(v_n))_n$ is
bounded and $J'(u_n)\to 0$ in $(H^1_r(\RN))'$ as $n\to \infty$,
admits a convergent subsequent.
\end{lem}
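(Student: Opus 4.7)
The first step is to show that any Palais--Smale sequence $(v_n)_n\subset H^1_r(\RN)$ is bounded. The idea is to exploit the quartic nature of the nonlocal term by considering
\[
J(v_n) - \tfrac{1}{4}\langle J'(v_n), v_n\rangle = \tfrac{1}{4}\|v_n\|^2 - \tfrac{\omega}{4}|v_n|_2^2 + \int_{\R^N}\bigl(F(x,v_n) - \tfrac{1}{4}F_s(x,v_n)v_n\bigr)\,dx,
\]
where the convolution contribution has cancelled out exactly because it is homogeneous of degree $4$. Hypothesis $F_3)$ with $k\leq 4$ and $F\geq 0$ make the last integrand nonnegative, while the trace inequality \eqref{C2} with $\ve=m$ yields $|v_n|_2^2\leq\|v_n\|^2/m$. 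Since $\omega<m$, the quadratic terms dominate by $\tfrac{1}{4}(1-\omega/m)\|v_n\|^2$. Combined with $|J(v_n)|\leq C$ and $|\langle J'(v_n), v_n\rangle|\leq\|J'(v_n)\|_{(H^1_r)'}\|v_n\|=o(\|v_n\|)$, a quadratic inequality of the form $a\|v_n\|^2 \le b+c\|v_n\|$ forces $\|v_n\|\leq C$.

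\textbf{Strong convergence.} After extracting a subsequence, $v_n\rightharpoonup v$ in $H^1_r(\RN)$. Continuity of the trace gives $v_n(\cdot,0)\rightharpoonup v(\cdot,0)$ in $H^{1/2}_r(\R^N)$, and the compact embedding of radial $H^{1/2}$--functions then upgrades this to strong convergence $v_n\to v$ in $L^q(\R^N)$ for every $q\in(2, 2N/(N-1))$. To obtain $\|v_n-v\|\to 0$ I would test the relation
\[
\langle J'(v_n)-J'(v), v_n-v\rangle \to 0,
\]
valid by the Palais--Smale condition and the weak convergence. The principal part equals $\|v_n-v\|^2-\omega|v_n-v|_2^2$, which by \eqref{C2} is bounded below by $(1-\omega/m)\|v_n-v\|^2$. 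The $F_s$--difference vanishes in the limit by H\"older combined with the growth bound $F_2)$ and strong $L^\ell$ and $L^p$ convergence. The convolution difference is handled by writing
\[
(W\ast v_n^2)v_n-(W\ast v^2)v=\bigl(W\ast((v_n-v)(v_n+v))\bigr)v_n+(W\ast v^2)(v_n-v),
\]
splitting $W=W_1+W_2$, and applying Young's inequality for convolutions with exponent $q=2r/(2r-1)$ exactly as in the derivation of \eqref{maggconv}; note that $2q=4r/(2r-1)\in(2, 2N/(N-1))$ since $r>N/2$, so the strong $L^{2q}$ convergence of $v_n$ is available and closes the estimate. Coercivity together with $\omega<m$ then forces $\|v_n-v\|\to 0$.

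\textbf{Main obstacle.} The key difficulty, compared with a purely local problem, lies in passing to the limit in the Hartree--type convolution term: weak $H^1$--convergence alone is insufficient, and one must combine Young's inequality for convolutions -- tuned precisely to the integrability exponent $r$ of $W_1$ -- with the \emph{right} subcritical strong convergence coming from the radial compactness (and, critically, verify that $4r/(2r-1)$ lies in the subcritical range). The boundedness step is delicate in a dual sense: the coefficient $\tfrac{1}{4}$ in the combination $J-\tfrac{1}{4}\langle J',\cdot\rangle$ is forced by the quartic structure of the convolution, and this is exactly what makes the compatibility condition $k\leq 4$ in $F_3)$ necessary for the present argument.
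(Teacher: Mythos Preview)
Your proposal is correct and follows essentially the same strategy as the paper: boundedness via the combination $4J(v_n)-J'(v_n)v_n$ (your $J-\tfrac14 J'$ is the same identity), coercivity from \eqref{C2} with $\ve=m$ and $F_3)$ with $k\le 4$, compact radial trace embedding for the subcritical $L^q$ convergence, and then control of the $F_s$-- and convolution--differences by H\"older/Young tuned to the exponent $r$ of $W_1$.

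The only structural difference is in how the limit is reached. The paper first proves that the weak limit $v$ is itself a critical point (passing to the limit in $J'(v_n)w\to 0$ for every fixed $w$), and only then writes $J'(v_n)(v_n-v)=[J'(v_n)-J'(v)](v_n-v)$. You bypass this intermediate step by observing directly that $\langle J'(v),v_n-v\rangle\to 0$ from weak convergence; this is a legitimate and slightly more economical route. Your exponent verification $4r/(2r-1)\in(2,2N/(N-1))$ for $r>N/2$ is correct and makes explicit the compatibility between the Young exponent for $W_1$ and the range where radial compactness is available --- the paper reaches the same conclusion through a somewhat longer chain of inequalities \eqref{magcolq}--\eqref{quasifinale1}. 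One point you leave implicit is the treatment of the $W_2\in L^\infty$ piece of the convolution (your exponent $q=2r/(2r-1)$ is tailored to $W_1$); the paper handles this separately in \eqref{mag5}, and you should indicate the corresponding estimate for $W_2$ rather than only citing \eqref{maggconv}.
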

\begin{proof}
Let $(v_n)_n$ in $H^1_r(\RN)$ be as in the statement above. We
first prove that $(v_n)_n$ is bounded. Indeed, by assumption, there
exist $A,B>0$ such that
\[
4J(v_n)-J'(v_n)(v_n)\leq A+B\|v_n\|.
\]
On the other hand,
\[
\begin{aligned}
4J(v_n)-J'(v_n)(v_n)& =\int_{\RN}\big[|Dv_n|^2+m^2v_n^2\big]
\,dxdx_{N+1}-\omega\int_{\R^N}v_n^2dx\\
&+\int_{\R^N}\big[4F(x,v_n)-F_s(x,v_n)v_n\big]\, dx\\
&\geq \left(1-\frac{\omega}{m}\right)\int_{\RN}|Dv_n|^2dxdx_{N+1}+m
(m-\omega)\int_{\RN}v_n^2dxdx_{N+1}
\end{aligned}
\]
by \eqref{C2} applied again with $\ve=m$ and by $F_3)$. In
conclusion, there exist $C>0$ such that
\[
C\|v_n\|^2\leq A+B\|v_n\|,
\]
and thus
\begin{equation}\label{limitat}
\mbox{$(v_n)_n$ is bounded in $H^1_r(\RN)$},
\end{equation}
as claimed, and so we can assume that
$v_n\rightharpoonup v$ in $H^1_r(\RN)$.

By \eqref{traceh}, (denoting as usual the trace of a function by the
function itself) we get that $(v_n)_n$ is bounded also in
$L^q(\R^N)$ for any $q\in\left[2,\frac{2N}{N-1}\right]$, so that we
may assume, without loss of generality, that $v_n\rightharpoonup v$
in $L^q(\R^N)$. However, by the compact embedding of
$H^{1/2}_r(\R^N)$ in $L^q_r(\R^N)$ for any
$q\in\left(2,\frac{2N}{N-1}\right)$, see  \cite{ss}, we actually
have that
\begin{equation}\label{convergenzaforte}
\mbox{$v_n\to v$ in $L^q(\R^N)$ for any $q\in\left(2,\frac{2N}{N-1}\right)$.}
\end{equation}
Up to subsequence, we can also assume that $v_n\to v$ a.e. in $\RN$.

We will now show that $v_n\to v$ in $H^1_r(\RN)$. First,
$F_s(x,v_n)\to F_s(x,v)$ a.e., since $F_s$ is a Carath\'eodory
function by $F_1)$, and by $F_2)$
\[
|F_s(x,v_n)(v_n-v)|\leq C_1|v_n|^{\ell-1}|v_n-v|+C_2|v_n|^{p-1}|v_n-v|.
\]
For instance,
\[
\int_{\R^N}|v_n|^{\ell-1}|v_n-v| \,dx\leq |v_n|_\ell^{\ell-1}
|v_n-v|_\ell\to 0  \quad \mbox{ as }n\to \infty
\]
by \eqref{convergenzaforte} and \eqref{limitat}. Analogously, we have
\[
|F_s(x,v)(v_n-v)|\leq C_1|v|^{\ell-1}|v_n-v|+C_2|v|^{p-1}|v_n-v|\to
0  \quad \mbox{ as }n\to \infty,
\]
and so  Lebesgue's Theorem gives
\begin{equation}\label{magwn}
\int_{\R^N}[F_s(x,v_n)-F_s(x,v)](v_n-v)\,dx \to 0 \quad \quad \mbox{
as }n\to \infty.
\end{equation}

Moreover,  for any $q\in\left(2,\frac{2N}{N-1}\right)$, so that
$q'\in \left(\frac{2N}{N+1},2\right)$, we get
\begin{equation}\label{magcolq}
\begin{aligned}
\bigg| & \int_{\R^N}\big(W\ast v_n^2\big)v_n(v_n-v)dx\bigg|\\
& \leq |v_n-v|_q \left(\int_{\R^N}|v_n(z)|^{q'}\left(
\int_{\R^N}W(x-z)v_n^2(x)dx\right)^{q'}dz \right)^{1/q'}.
\end{aligned}
\end{equation}

First, Minkowski's inequality implies that for any
$\wp\in[1,\infty]$ there holds
\begin{equation}\label{minko}
|W\ast v_n^2|_{\wp}\leq |W_1\ast v_n^2|_{\wp}+|W_2\ast v_n^2|_{\wp}.
\end{equation}

Now, by H\"older's inequality and  Young's inequality for
convolutions, for any $t\in (1,\infty)$ we have
\begin{equation}\label{miserve}
\begin{aligned}
\int_{\R^N} |v_n|^{q'}\left( \int_{\R^N}W_1(x-z)v_n^2(x)dx\right)^{q'}dz & \leq
|v_n|_{q't}^{q'}|W_1\ast v_n^2|_{t'q'}^{q'} \\
& \leq |v_n|_{q't}^{q'}|W_1|_r^{q'}|v_n^2|_{\sigma}^{q'},
\end{aligned}
\end{equation}
where $1+\frac{1}{t'q'}=\frac{1}{r}+\frac{1}{\sigma}$. Choosing $t$
such that $q't=2\sigma$, that is $\sigma=\frac{3q'r}{2(q'r+r-q')}$,
we finally have
\begin{equation}\label{magcolq'}
|v_n(W_1\ast v_n^2)|_{q'}\leq |v_n|_{2\sigma}^3|W_1|_r.
\end{equation}
Requiring $2\sigma \in\left[2, \frac{2N}{N-1}\right]$ implies
\[
\frac{2r}{r+2}\leq q' \leq \frac{2Nr}{rN-3r+2N}.
\]
Since $r>N/2$, we have
\[
\frac{2N}{N+4}<\frac{2r}{r+2}<2 \mbox{ and
}\frac{2N}{N+1}<\frac{2Nr}{rN-3r+2N}< \frac{2N}{N-3};
\]
finally have the nontrivial possibility $q'\in
\left(\frac{2N}{N+1},2\right)$, that is $q\in
\left(2,\frac{2N}{N-1}\right)$, with obvious meaning if $N\leq 3$.

In this way \eqref{magcolq}, \eqref{magcolq'} and \eqref{limitat}
imply the existence of $C>0$ such that
\begin{equation}\label{mag1}
\int_{\R^N} |v_n|^{q'}\left(
\int_{\R^N}W_1(x-z)v_n^2(x)dx\right)^{q'}dz\leq C \mbox{ for every
$n\in \N$}.
\end{equation}

On the other hand, proceeding as above, we have
\[
|v_n(W_2\ast v_n^2)|_{q'}\leq |v_n|_{q't}|v_n|_{2\sigma}^2|W_2|_\infty,
\]
where $t$ can be chosen so that
$q't=2\sigma\in\left[2,\frac{2N}{N-1}\right]$, obtaining, as above,
\begin{equation}\label{mag5}
|v_n(W_2\ast v_n^2)|_{q'}\leq |v_n|_{q't}|v_n|_{2\sigma}^2|W_2|_\infty\leq C
\end{equation}
for some $C>0$ and all $n\in \N$.

In conclusion, from \eqref{minko} we get
\begin{equation}\label{quasifinale}
\left|\int_{\R^N}\big(W\ast v_n^2\big)v_n(v_n-v)dx\right|\leq
C|v_n-v|_q
\end{equation}
for some $C>0$ and all $n\in \N$.

Reasoning as above, one can also prove that
\begin{equation}\label{quasifinale1}
\left|\int_{\R^N}\left(W\ast v^2\right)v(v_n-v)dx\right| \leq C
|v_n-v|_q
\end{equation}
for some $C>0$ and all $n\in \N$.

We also need to prove that $v$ is a critical point of $J$ on
$H^1_r(\RN)$. To this purpose, start from $J'(v_n)(w)\to 0$ as $n\to
\infty$ for every $w\in H^1_r(\RN)$, i.e.
\[
\langle v_n,w\rangle _{H^1}-\int_{\R^N} \big[\omega v_nw+\lambda
W\ast v_n^2 w-F_s(x,v_n)w\big]\,dx \to 0.
\]
Thus, by the weak convergence of $v_n$ to $v$, it is enough to prove
that
\[
\lim_{n\to \infty}\int_{\R^N} \big[\lambda \big(W\ast
v_n^2\big)v_nw- F_s(x,v_n)w\big]\,dx= \int_{\R^N} \big[\lambda
\big(W\ast v^2\big) vw-F_s(x,v)w\big]\,dx.
\]
First, by $F_2)$ we have
\[
\begin{aligned}
\left|\int_{\R^N}F_s(x,v_n)w \,dx\right|&\leq
\int_{\R^N}\big[C_1|v_n|^{\ell-1} +C_2|v_n|^{p-1}\big]|w|\,dx \mbox{ (by H\"older's inequality)} \\
&\leq
C_1|v_n|_{\ell}^{\ell-1}|w|_{\ell}+C_2|v_n|_{p}^{p-1}|w|_{p}\to
C_1|v|_{\ell}^{\ell-1}|w|_{\ell}+C_2|v|_{p}^{p-1}|w|_{p}.
\end{aligned}
\]
Hence, by the Generalized Dominated Convergence Theorem
\[
\lim_{n\to \infty}\int_{\R^N}F_s(x,v_n)w\,dx=
\int_{\R^N}F_s(x,v)w\,dx.
\]
Then,
\[
\begin{aligned}
& \left|\int_{\R^N}\Big[ \big(W\ast v_n^2\big)v_nw-\big(W\ast
v^2\big) vw\Big]\,dx\right|\\ &=\left|\int_{\R^N}\big(W\ast
v_n^2\big)(v_n-v)w\,dx+\int_{\R^N}\big(W\ast(v_n^2-v^2)vw\,dx\right|,
\end{aligned}
\]
and starting like in \eqref{miserve}, we can estimate the previous
identity with
\[
\big(|W_1|_r+|W_2|_\infty\big)\big(|v_n-v|_{2\sigma}|v_n|_{2\sigma}
+|v_n^2-v^2_\sigma|v|_{2\sigma} \big)|w|_{2\sigma},
\]
where $2\sigma \in\left(2,\frac{2N}{N-1}\right)$. Hence, proceeding
as done to obtain \eqref{quasifinale} and \eqref{quasifinale1}, the
last quantity goes to 0 as $n\to \infty$ and thus we can conclude
that $v$ is a critical point of $J$ on $H^1_r(\RN)$.

Finally, we now know that $J'(v_n)(v_n-v)=[J'(v_n)-J'(v)](v_n-v)\to
0$ as $n\to\infty$, i.e.
\[
\begin{aligned}
&\|v_n-v\|^2-\omega \int_{\R^N}(v_n-v)^2\,dx-\lambda \int_{\R^N}
\left(v_n(W\ast v_n^2)-v(W\ast v^2)\right)(v_n-v)\,dx\\
&+\int_{\R^N}[F_s(x,v_n)-F_s(x,v)](v_n-v)\,dx\to 0 \quad\mbox{ as }n\to\infty.
\end{aligned}
\]

By \eqref{C2} applied again with $\ve=m$, from \eqref{magwn},
\eqref{quasifinale}, \eqref{quasifinale1} and
\eqref{convergenzaforte}, we have
\begin{equation}\label{finePS}
\begin{aligned}
o(1)&=J'(v_n)(v_n-v)\\
&\geq
\left(1-\frac{\omega}{m}\right)\int_{\RN}|D(v_n-v)|^2dxdx_{N+1}+
m(m-\omega)\int_{\RN}(v_n-v)^2dxdx_{N+1}
\end{aligned}
\end{equation}
where $o(1)\to 0$ as $n\to \infty$. Since $\omega<m$, we finally get
that $v_n\to v$ in $H^1_r(\RN)$, as claimed.
\end{proof}

At this point, the existence of a nontrivial critical point $v$ for
$J$ is proved by applying the Mountain Pass Theorem, and so the
first part of Theorem \ref{main} holds true.

Now, assume also that $F(x,s)\geq F(x,|s|)$. Recall that for the
critical point found above, the critical level $\beta=J(v)$ is
defined as
\[
\beta=\inf_{\gamma\in \Gamma}\max_{t\in[0,1]}J(\gamma(t)),
\]
where $\Gamma=\{\gamma \in C([0,1],H^1_r(\RN)\,:\,\gamma(0)=0,
J(\gamma(1))<0\}$. Now, the map $t\mapsto J(tv)$ is increasing and
has a strict maximum point at $t=1$; taken $\tau>0$ such that
$J(\tau v)<0$, define $\tilde \gamma(t)=t\tau |v|$, so that $\tilde
\gamma \in \Gamma$, by the additional assumption on $F$; moreover,
the map $t\mapsto J(t|v|)$ has a unique maximum point at $t=1$.
Thus, it is readily seen that $J(t\tau v)\geq J(\tilde \gamma(t))$,
so that
\[
\beta=\max_{t\in[0,1]}J(t\tau v)\geq \max_{t\in[0,1]}J(\tilde \gamma(t))\geq \beta.
\]
Hence, also $t\mapsto t|v|$ is a path giving the same critical
level. If, by contradiction, $|v|$ were not a critical point for
$J$, we could define a path $\bar\gamma\in\Gamma$, obtained
deforming $\tilde \gamma$ via the gradient flow, in such a way that
$\max_{t\in[0,1]}J(\bar\gamma(t))<\beta$, contradicting the
definition of $\beta$ itself. In conclusion, also $|v|$ is a
critical point for $J$.

Finally, if there exists $(\bar x,\bar x_{N+1})\in \RN$ such that
$v((\bar x,\bar x_{N+1})=0$, by the maximum principle for the
equation in $\RN$, we get $x_{N+1}=0$ and by the boundary condition
we have $\frac{\partial v}{\partial {x_{N+1}}}(\bar x,0)=0$, in
contradiction with Hopf's Lemma. Hence $v>0$ and Theorem \ref{main}
is completely proved.

\begin{rem}
In \cite{czn} the proof of the Palais--Smale condition for the
related problem is given without using the compact embedding of
\cite{ss}, but exploiting the assumption that $W(x)\to 0$ as $|x|\to
\infty$, which here we do not require.
\end{rem}

\medskip

We now give the
\begin{proof}[Proof of Proposition $\ref{Lagrange}$]
Since $\omega<m$, the quantity
\[
\frac{1}{2}\int_{\RN}(|Dv|^2+m^2v^2)\,dxdy-\int_{\R^N}\frac{\omega}{2}v^2dx
\]
defines a norm $\|v\|^2_{\sim}$ in $H^1_r(\RN)$ which is equivalent
to the usual one. Indeed, applying \eqref{C2} with $\ve=m$, we get
\[
\|v\|^2_{\sim}\geq
\left(1-\frac{\omega}{m}\right)\int_{\RN}|Dv|^2dxdy +(m^2-\omega
m)\int_{\RN}v^2dxdy\geq C\|v\|^2
\]
for any $v\in H^1_r(\RN)$.

Now, let us set
\[
M=\left\{v\in H^1_r(\RN)\,:\, \int_{\R^N}v^2\big(W\ast
v^2\big)\,dx=1\right\}={\mathscr G}^{-1}(0),
\]
where ${\mathscr G}(v)= \int_{\R^N}v^2\big(W(x)\ast v^2\big)\,dx-1$.

It is easy to see that $M$ is a non empty differentiable manifold of
codimension 1. Indeed, for any $v\in H^1_r(\RN)$ such that
$v(x,0)\not=0$, consider the map $\psi:[0,\infty)\to \R$ defined as
\begin{equation}\label{psimia}
\psi(t)=t \int_{\R^N}v^2\big(W\ast v^2\big)\,dx.
\end{equation}
It is clear that $\psi$ is a strictly increasing function such that
$\psi(0)=0$ and $\psi(t)\to\infty$ as $t\to \infty$, and thus $M$ is
non empty. Moreover, if $v\in M$ and we assume that ${\mathscr
G}'(v)(w)=0$, i.e. (using the symmetry of $W$)
\[
\int_{\R^N}\big(W\ast v^2\big)vw\,dx=0 \quad \forall\,w\in
H^1_r(\RN),
\]
we get in particular
\[
\int_{\R^N}v^2\left(W\ast v^2\right)\,dx=0,
\]
a contradiction with the fact that $v\in M$.

Now, since $F\geq 0$, we immediately see that the $C^1$ functional
$I:H^1_r(\RN)\to \R$ defined as
\[
I(v)=\frac{1}{2}\|v\|^2_{\sim}+\int_{\R^N}F(x,v)\,dx
\]
is bounded below. Let $(v_n)_n\subset M$ be a minimizing sequence
for $I$ on $M$. It is readily seen that $(v_n)_n$ is bounded, so
that we may assume that $v_n\rightharpoonup v$ in $H^1_r(\RN)$,
$v_n\to v$ in $L^q(\R^N)$ for every $q\in
\left(2,\frac{2N}{N-1}\right)$ and a.e. in $\R^N$. Moreover, by
Ekeland's Variational Principle (see for example \cite[Theorem
8.5]{willem}) we can also assume that $I_{|M}'(v_n)\to 0$, i.e.
there exists a sequence $(\mu_n)_n$ in $\R$ such that
\begin{equation}\label{quasiLag}
I'(v_n)(w)-\mu_n\int_{\R^N}\left(W\ast v_n^2\right)v_nw\,dx\to 0
\end{equation}
as $n\to \infty$ for every $w\in M$, and hence for any $w\in
H^1_r(\RN)$. Since $v_n\in M$ for any $n\geq1$ and $(v_n)_n$ is
bounded, from $I'(v_n)(v_n)-\mu_n\to 0$, i.e.
\[
\|v_n\|_{\sim}^2-\int_{\R^N}F_s(x,v_n,v_n)\,dx-\mu_n\to0,
\]
we get that also $\int_{\R^N}F_s(x,v_n,v_n)\,dx-\mu_n$ is bounded.
By $F_2)$ we have
\[
\left|\int_{\R^N}F_s(x,v_n,v_n)\,dx\right|\leq
C_1\int_{\R^N}|v_n|^{\ell}dx+C_2\int_{\R^N}|v_n|^pdx\leq C
\]
for some universal constant $C>0$, since $(v_n)_n$ is bounded. In
conclusion, also $(\mu_n)_n$ is bounded. Hence, we can suppose that
there exists $\lambda\in \R$ such that $\mu_n\to \lambda$ as $n\to
\infty$. Proceeding as in the proof of Lemma \ref{ps}, we can now
show that, up to subsequences, $v_n\to v$ in $H^1_r(\RN)$. Moreover,
$M$ being closed in $H^1_r(\RN)$, we get that $v\in M$ is a
nontrivial minimum point of $I$ in $M$. Passing to the limit in
\eqref{quasiLag}, we get
\[
I'(v)(w)-\lambda\int_{\R^N}\left(W\ast v^2\right)vw\,dx=0
\]
for all $w\in M$ - hence for all $w\in H^1_r(\RN)$ -, i.e. $v$
solves \eqref{PN} with $\lambda$ given as a Lagrange multiplier.

If in addition $2F(x,s)\leq F_s(x,s)s$ for all $s\in \R$ and a.e. $x\in \R^N$, then
\[
\begin{aligned}
\lambda\int_{\R^N}\left(W\ast
v^2\right)v^2dx&=I'(v)(v)=\|v\|^2_{\sim}+
\int_{\R^N}F_s(x,v)v\,dx\\
&=2I(v)+\int_{\R^N}[F_s(x,v)v-2F(x,v)]\,dx\geq 2I(v)>0,
\end{aligned}
\]
and so $\lambda>0$.
\end{proof}

\begin{proof}[Proof of Proposition $\ref{genus}$]
It is an application of the following multiplicity theorem based on
the Krasnoselskii genus, see \cite[Theorem 5.7]{Struwe}:
\begin{theo}\label{sopra}
Suppose that $I$ is an even $C^1$ functional on a complete symmetric
$C^{1,1}$ manifold $\mathcal M$ contained in a Banach space $B$, and
suppose that $I$ satisfies $(PS)$ and is bounded from below. Let
\[
\tilde{\gamma}(\mathcal M)=\sup\{\gamma(K)\,:\,K\subset \mathcal
M\mbox{ is compact and symmetric}\}\leq \infty.
\]
Then $I$ admits at least $\tilde{\gamma}(\mathcal M)$ pairs of
critical points on $\mathcal M$.
\end{theo}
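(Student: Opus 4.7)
The plan is to carry out a classical Lusternik--Schnirelmann minimax argument on $\mathcal M$, with the Krasnoselskii genus playing the role of the topological index. For each integer $k$ with $1 \leq k \leq \tilde\gamma(\mathcal M)$ I would set
\[
\mathcal F_k = \{K \subset \mathcal M \,:\, K \text{ compact, symmetric}, \ \gamma(K) \geq k\}, \qquad c_k = \inf_{K \in \mathcal F_k}\sup_{u \in K} I(u).
\]
By the definition of $\tilde\gamma(\mathcal M)$ each $\mathcal F_k$ is nonempty, and since $I$ is bounded below each $c_k$ is a real number, with $c_1 \leq c_2 \leq \cdots$ because of the nesting $\mathcal F_{k+1} \subset \mathcal F_k$. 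The goal is to show that each $c_k$ is a critical value of $I_{|\mathcal M}$ and that whenever $c_k = \cdots = c_{k+p-1}$ the corresponding critical set has genus at least $p$; this will yield at least $\tilde\gamma(\mathcal M)$ distinct pairs $\{\pm u\}$ of critical points altogether.

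The key technical tool is an odd (equivariant) deformation lemma. Since $\mathcal M$ is a complete $C^{1,1}$ manifold and $I$ is $C^1$, the standard pseudo-gradient construction on the regular set $\mathcal M \setminus K_c$, where $K_c = \{u \in \mathcal M : I(u) = c,\ I'_{|\mathcal M}(u) = 0\}$, produces a locally Lipschitz pseudo-gradient vector field $V$; the substitution $V(u) \mapsto \tfrac12(V(u) - V(-u))$ makes it odd, and the integral flow $\eta_t$ it generates is then odd, maps $\mathcal M$ into itself, and strictly decreases $I$ away from $K_c$. Combined with $(PS)$, this yields the usual statement: given $c \in \R$ and any symmetric open neighborhood $U$ of $K_c$ (with $U = \emptyset$ if $K_c = \emptyset$), there exist $\varepsilon > 0$ and an odd continuous map $\eta : \mathcal M \to \mathcal M$ with
\[
\eta\bigl(\{I \leq c + \varepsilon\} \setminus U\bigr) \subset \{I \leq c - \varepsilon\}.
\]

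With this in hand the minimax analysis is routine. To see that $c_k$ is critical, suppose $K_{c_k} = \emptyset$, apply the lemma with $U = \emptyset$ and pick $K \in \mathcal F_k$ with $\sup_K I < c_k + \varepsilon$; then $\eta(K)$ is compact and symmetric, and since the genus does not decrease under odd continuous maps we get $\gamma(\eta(K)) \geq \gamma(K) \geq k$, so $\eta(K) \in \mathcal F_k$, while $\sup_{\eta(K)} I \leq c_k - \varepsilon$, a contradiction. For the multiplicity step, suppose $c_k = c_{k+1} = \cdots = c_{k+p-1} = c$ and $\gamma(K_c) \leq p - 1$; by the neighborhood property of the genus (applicable since $K_c$ is compact by $(PS)$), there is a symmetric open $U \supset K_c$ with $\gamma(\overline U) = \gamma(K_c) \leq p - 1$. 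Choosing $K \in \mathcal F_{k+p-1}$ with $\sup_K I < c + \varepsilon$ and setting $K' = K \setminus U$, which is compact, symmetric, and satisfies $\gamma(K') \geq \gamma(K) - \gamma(\overline U) \geq k$ by subadditivity, one finds $\eta(K') \in \mathcal F_k$ while $\eta(K') \subset \{I \leq c - \varepsilon\}$, again a contradiction. Hence $\gamma(K_c) \geq p$, so $K_c$ contains at least $p$ pairs $\{\pm u\}$ of critical points, and summing over the distinct critical values produces at least $\tilde\gamma(\mathcal M)$ pairs overall.

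The main delicate point I expect is the odd deformation lemma on the $C^{1,1}$ manifold $\mathcal M$ with a merely $C^1$ functional: one has to carry out the pseudo-gradient construction intrinsically on $\mathcal M$ rather than in the ambient Banach space $B$, average it in a $u \mapsto -u$ equivariant way while preserving the pseudo-gradient inequality, and verify that the resulting flow stays on $\mathcal M$ and is globally defined thanks to $(PS)$ together with the completeness of $\mathcal M$. Once this equivariant deformation is in place, the rest of the proof is just a careful application of the monotonicity, subadditivity, and neighborhood-continuity properties of the Krasnoselskii genus.
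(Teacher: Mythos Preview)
Your sketch is the standard Lusternik--Schnirelmann argument and is correct in outline; however, the paper does not prove this theorem at all --- it is simply quoted from \cite[Theorem 5.7]{Struwe} and used as a black box in the proof of Proposition~\ref{genus}. So there is no ``paper's own proof'' to compare with: what you have written is essentially the textbook proof that the paper is citing.
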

Here $\gamma(A)$ denotes the Krasnoselskii genus of a symmetric set
$A$, defined as $\gamma(\emptyset)=0$, and when $A\neq \emptyset$,
\[
\gamma(A)=\left\{\begin{array}{l}
\inf\{m\in \N\,:\,\exists \,h\in
C^0(A,\R^m\setminus\{0\}) \mbox{ odd}\},\\
\infty \mbox{ if }\{m\in \N\,:\,\exists \,h\in
C^0(A,\R^m\setminus\{0\}) \mbox{ odd}\}=\emptyset.
\end{array}\right.
\]
In particular $\gamma(A)=\infty$ for any symmetric set containing 0;
see \cite{Rab} or \cite{Struwe} for an introduction to the genus and
some related results and applications.

In our case, we proceed as in the previous proof, restricting $I$ on
the symmetric manifold $M$. Imitating the steps above, one can see
that $I$ satisfies the $(PS)$ condition on $M$, while, applying
Theorem \ref{sopra}, the existence part of the Proposition follows
from the following Lemma, whose proof is given in the Appendix:
\begin{lem}\label{genere}
$\tilde{\gamma}(M)=\infty$.
\end{lem}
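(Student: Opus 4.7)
The plan is to show that for every $n \in \N$ there exists a compact symmetric subset $K_n \subset M$ with $\gamma(K_n) \geq n$, whence $\tilde\gamma(M) = \infty$ follows from the definition of $\tilde\gamma$. By the standard monotonicity and normalization properties of the Krasnoselskii genus, it is enough to produce a continuous odd map $\Phi_n : S^{n-1} \to M$ (with $S^{n-1}$ the Euclidean unit sphere in $\R^n$) and set $K_n := \Phi_n(S^{n-1})$: compactness follows from continuity, symmetry from oddness, and $\gamma(K_n) \geq \gamma(S^{n-1}) = n$ from the monotonicity applied to the odd continuous map $\Phi_n$.

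To build $\Phi_n$, I would pick $n$ smooth, nonzero, radial functions $\phi_1,\ldots,\phi_n \in C^\infty_c(\R^N)$ with pairwise disjoint supports, each supported in an annulus $\{R_i \leq |x| \leq R_i'\}$, with the annuli well-separated along the radial coordinate. Let $\varphi_i \in H^1_r(\RN)$ be the generalized harmonic extension of $\phi_i$ provided by \eqref{soprax}; by radial symmetry in the first $N$ variables, $\varphi_i$ indeed belongs to $H^1_r(\RN)$. Recalling that $\psi$ from \eqref{psimia} is homogeneous of degree $4$ and even, I would set $v_\alpha := \sum_{i=1}^n \alpha_i \varphi_i$ and
\[
\Phi_n(\alpha) := \psi(v_\alpha)^{-1/4}\, v_\alpha, \qquad \alpha = (\alpha_1,\ldots,\alpha_n) \in S^{n-1}.
\]
Provided $\psi(v_\alpha) > 0$ on $S^{n-1}$, it is immediate that $\Phi_n(\alpha) \in M$, that $\Phi_n$ is odd (by linearity of $\alpha \mapsto v_\alpha$ and evenness of $\psi$), and that $\Phi_n$ is continuous (continuity of $\psi$ on a fixed finite-dimensional subspace being elementary in view of \eqref{maggconv}).

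The main obstacle will be verifying that $\psi(v_\alpha)$ is bounded away from zero on $S^{n-1}$. Since the traces $\phi_i$ have pairwise disjoint supports, $v_\alpha^2 = \sum_i \alpha_i^2 \phi_i^2$ on $\R^N$; using $W \geq 0$ to discard the nonnegative cross terms in the double-sum expansion of $\psi(v_\alpha)$,
\[
\psi(v_\alpha) \;\geq\; \sum_{i=1}^n \alpha_i^4\, c_i, \qquad c_i := \int_{\R^N} \phi_i^2\,(W \ast \phi_i^2)\,dx.
\]
The crux is then to arrange $c_i > 0$ for each $i$, which I would do by exploiting that $W$ is nonnegative, radial and not identically zero: once a radial layer on which $W > 0$ in positive measure is identified, an elementary argument using the disjoint annular supports and the freedom in placing and shaping the $\phi_i$ yields $c_i > 0$. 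Finally, on $S^{n-1}$ the power-mean inequality gives $\sum_i \alpha_i^4 \geq 1/n$, so
\[
\psi(v_\alpha) \;\geq\; \frac{1}{n}\min_{1 \leq i \leq n} c_i \;>\; 0
\]
uniformly in $\alpha \in S^{n-1}$. This uniform positivity makes $\Phi_n$ well-defined and continuous, and concludes the plan.
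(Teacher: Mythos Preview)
Your argument is essentially the paper's: both radially project the unit sphere of an $n$--dimensional subspace of $H^1_r(\RN)$ onto $M$ via $u\mapsto\big(\int u^2(W\ast u^2)\big)^{-1/4}u$ and then invoke the monotonicity of the genus to conclude $\gamma\geq n$. The paper proceeds more abstractly --- it fixes a $k$--dimensional subspace $H_k$, shows $M\cap H_k$ is bounded (hence compact in $H_k$) by a normalization--and--limit argument, and cites \eqref{psimia} for the well--posedness of the projection --- whereas you build the subspace explicitly from harmonic extensions of disjointly supported radial bumps and verify the positivity of the constraint by hand; one notational slip: what you call $\psi$ is the degree--4 functional ${\mathscr G}(v)+1$, not the scalar map $t\mapsto t\int v^2(W\ast v^2)$ defined in \eqref{psimia}.
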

The final statement in Proposition \ref{genus} is exactly as in the
previous proof.
\end{proof}

We conclude this section with the
\begin{proof}[Proof of Proposition $\ref{u>0}$]
If $\lambda\leq0$, it is enough to repeat the final part of the
proof of Proposition \ref{Lagrange} replacing the functional $I$ by
the functional
\[
{\cal I}(v)=\frac{1}{2}\|v\|^2_{\sim}+\int_{\R^N}F(x,v^+)\,dx,
\]
where $v^+=\max\{v,0\}$, so that the minimum point solves
\[
\begin{cases}
-\Delta v+m^2v=0 &\mbox{ in }\R^{N+1}_+,\\
-\frac{\partial v}{\partial x_{N+1}}=\omega v+\lambda\left(W\ast
v^2\right)v -F_s(x,v^+)&\mbox{ on }\R^N\times\{0\}=\partial
\R^{N+1}_+.
\end{cases}
\]
Using $v^-$ as test function, we find
\[
\begin{aligned}
-\|v^-\|_{\sim}^2&=-\lambda \int_{\R^N} \left(W\ast
v^2\right)(v^-)^2dx- \int_{\R^N}F_s(x,v^+)v^-dx\\ &=-\lambda
\int_{\R^N} \left(W\ast v^2\right)(v^-)^2dx\geq 0,
\end{aligned}
\]
so that $v=v^+\geq0$.

If $F(x,s)\geq F(x,|s|)$ one can immediately see that $I(v)\geq
I(|v|)$, so that $|v|$ is again a minimum point for $I$.

The strong maximum principle implies $v>0$ in  both cases.
\end{proof}

\section{Variational identities and proof of the non existence results}

We start this section with the first non existence result, whose
proof is very easy and can be obtained without additional new tools.
In particular, here we don't need the new variational identities for
the half Laplacian (see Lemma \ref{lem4} and equation
\eqref{nonex}), which will be developed below in order to prove the
more general non existence results.
\begin{proof}[Proof of Proposition $\ref{lambda0}$]
Taking $v$ as test function in \eqref{defsol}, if $\omega\leq 0$ we have
\[
\begin{aligned}
0&=\int_{\RN}(|Dv|^2+m^2v^2)\,dxdx_{N+1}-\omega\int_{\R^N}v^2
dx-\lambda\int_{\R^N}\left(W\ast v^2\right)v^2dx\\
&+ \int_{\R^N}F_s(x,v)v\,dx \geq \int_{\RN}(|Dv|^2+m^2v^2)\,dxdx_{N+1},
\end{aligned}
\]
and the thesis follows. If $\omega\in(0,m)$, applying \eqref{C2} with $\ve=m$, we find
\[
\begin{aligned}
0&=\int_{\RN}(|Dv|^2+m^2v^2)\,dxdx_{N+1}-\omega\int_{\R^N}v^2
dx-\lambda\int_{\R^N}\left(W\ast v^2\right)v^2dx\\
&+ \int_{\R^N}F_s(x,v)v\,dx \\
&\geq \left(1-\frac{\omega}{m}\right)\int_{\RN}|Dv|^2dxdx_{N+1}
+(m^2-\omega m)\int_{\RN}v^2dxdx_{N+1}\geq C\|v\|^2
\end{aligned}
\]
for some $C>0$, and again the claim is proved.
\end{proof}

Now, we show by some variational identities that the existence
results of the previous section (in particular Theorem \ref{main})
are, in some sense, optimal, provided that $F=F(s)$.

Let $v\in H^1(\R^{N+1}_+)$ be a solution of \eqref{PN} with
$F=F(s)$. By reasoning as in \cite[Theorem 3.2 and Proposition
3.9]{czn}, we can show that
\begin{itemize}
\item $v\in L^\infty(\R^{N+1}_+)$,
\item $v\in L^p(\R^N)$ $\forall\,p\in[2,\infty]$,
\item $v\in C^{0,\alpha}(\overline{\R^{N+1}_+})\cap W^{1,q}(\R^N\times(0,R))$ for any
$q\in[2,\infty)$ and all $R>0$,
\item if $F$ is of class $C^{0,\alpha}(\R)$, then $v\in C^{1,\alpha}(\overline{\R^{N+1}_+})\cap
C^2(\R^{N+1}_+)$ and is a classical solution of \eqref{PN}.
\end{itemize}

Let us set $X=(x,x_{N+1})$ with $x=(x_1,\ldots,x_N)\in \R^N$.
Moreover, define
\[
\Delta_R=\Big\{X\in \R^N\times [0,\infty)\,:\,|X|^2\leq R\Big\},
\]
\[
S_R^+=\Big\{X\in \R^{N+1}_+\,:\,|X|=R \mbox{ and }x_{N+1}>0\Big\}
\]
and
\[
b_r=\Big\{X\in \Delta_R\,:\,x_{N+1}=0 \Big\}.
\]
Finally, for shortness, we write $v_i$ in place of $v_{x_i}$.

We state the following result for regular functions, like solutions
of \eqref{PN} if $F$ is of class $C^{0,\alpha}$, with obvious
generalization for Sobolev functions in $H^2_{\rm loc}(\RN)\cap
H^1(\RN)$, since functions n this space admit traces on every
manifold appearing in the calculations below.
\begin{lem}\label{lemma1}
For any $v\in C^{1,\alpha}(\overline{\RN})\cap C^2(\RN)$ and for
every $R>0$ there holds
\begin{equation}\label{deltagrad}
\begin{aligned}
\int_{\Delta_R}-\Delta vX\cdot
Dv\,dX&=\frac{1-N}{2}\int_{\Delta_R}|Dv|^2dX+\int_{b_R}v_{N+1}Dv\cdot x\,dx\\
&+\int_{S_R^+}\left[\frac{R}{2}|Dv|^2-
\frac{1}{R}|Dv\cdot X|^2 \right]\,d\sigma,
\end{aligned}
\end{equation}
\begin{equation}\label{ggrad}
\int_{\Delta_R}g(v)X\cdot
Dv\,dX=-(N+1)\int_{\Delta_R}G(v)\,dX+R\int_{S_R^+}G(v)\,dx.
\end{equation}
Here $g:\R\to\R$ is any continuous function and
$G(s)=\int_0^sg(t)\,dt$.
\end{lem}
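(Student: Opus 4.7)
The plan is that both identities are instances of standard Pohozaev--Rellich manipulations on the half ball $\Delta_R\subset\R^{N+1}$: one writes the integrand as a divergence plus a controlled bulk term, then applies the divergence theorem, being careful to split the boundary $\partial\Delta_R$ into the spherical cap $S_R^+$ (outward unit normal $\nu=X/R$) and the flat piece $b_R\subset\{x_{N+1}=0\}$ (outward unit normal $\nu=-e_{N+1}$, so $X\cdot\nu=-x_{N+1}=0$ on $b_R$).

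For identity \eqref{ggrad} I would proceed first, since it is the shorter one. Observing that $g(v)X\cdot Dv=X\cdot D(G(v))$ and that $\operatorname{div}X=N+1$ in $\R^{N+1}$, I would apply the divergence theorem to the vector field $X\,G(v)$ on $\Delta_R$:
\begin{equation*}
\int_{\Delta_R}\bigl[(N+1)G(v)+X\cdot DG(v)\bigr]dX=\int_{\partial\Delta_R}(X\cdot\nu)G(v)\,d\sigma.
\end{equation*}
On $b_R$ the boundary integrand vanishes because $X\cdot\nu=0$ there, and on $S_R^+$ we have $X\cdot\nu=R$; rearranging gives exactly \eqref{ggrad}.

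For identity \eqref{deltagrad} I would use the classical Pohozaev algebraic rewriting. Starting from
\begin{equation*}
\Delta v\,(X\cdot Dv)=\operatorname{div}\bigl(Dv\,(X\cdot Dv)\bigr)-Dv\cdot D(X\cdot Dv),
\end{equation*}
and computing
\begin{equation*}
Dv\cdot D(X\cdot Dv)=|Dv|^2+\tfrac12 X\cdot D\bigl(|Dv|^2\bigr),
\end{equation*}
I would substitute and then apply the divergence theorem once more to $X|Dv|^2$, using $\operatorname{div}X=N+1$, to get
\begin{equation*}
\int_{\Delta_R}\tfrac12 X\cdot D(|Dv|^2)\,dX=-\tfrac{N+1}{2}\int_{\Delta_R}|Dv|^2\,dX+\tfrac12\int_{\partial\Delta_R}(X\cdot\nu)|Dv|^2\,d\sigma.
\end{equation*}
Combining these pieces yields
\begin{equation*}
\int_{\Delta_R}-\Delta v\,(X\cdot Dv)\,dX=\tfrac{1-N}{2}\int_{\Delta_R}|Dv|^2\,dX+\int_{\partial\Delta_R}\Bigl[\tfrac12(X\cdot\nu)|Dv|^2-(X\cdot Dv)(Dv\cdot\nu)\Bigr]d\sigma.
\end{equation*}

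The last step is to unpack the boundary integral over the two pieces. On $S_R^+$, with $\nu=X/R$, the bracket becomes $\tfrac{R}{2}|Dv|^2-\tfrac{1}{R}|Dv\cdot X|^2$, matching the stated spherical contribution. On $b_R$, using $\nu=-e_{N+1}$ and $x_{N+1}=0$, the first term vanishes and the second reduces to $v_{N+1}(x\cdot Dv)=v_{N+1}\,Dv\cdot x$, producing the required flat contribution. Neither step is genuinely delicate; the only point requiring care is the bookkeeping of signs and of the decomposition $X\cdot Dv=x\cdot D_x v+x_{N+1}v_{N+1}$ on the flat boundary, which is what makes the $v_{N+1}\,Dv\cdot x$ term emerge cleanly. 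For merely Sobolev solutions (as in the $H^2_{\mathrm{loc}}$ case mentioned in the remark preceding the lemma), one justifies the integrations by parts by an approximation argument, exploiting that traces on $S_R^+$ and $b_R$ are well defined for such $v$.
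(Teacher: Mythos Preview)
Your proposal is correct and is essentially the same Pohozaev--Rellich integration-by-parts argument as in the paper; the only cosmetic difference is that the paper carries out the computation component by component (writing $\int_{\Delta_R} v_i v_{ij} x_j\,dX$ and $\int_{\Delta_R} g(v)v_i x_i\,dX$ and summing over $i,j$) whereas you package the same steps via $\operatorname{div}(X\,G(v))$ and $\operatorname{div}(Dv\,(X\cdot Dv))$. The boundary splitting into $S_R^+$ and $b_R$ with $X\cdot\nu=R$ and $X\cdot\nu=0$ respectively is identical in both.
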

\begin{proof}
Fix $i\in\{1,\ldots,N+1\}$. Denoting by $\nu$ the outward unit
vector to $\partial\Delta_R$, we have
\[
\nu(X)=\begin{cases} \frac{X}{|X|}&\mbox{ if }x_{N+1}>0,\\
(\underbrace{0,\ldots,0}_N,-1)&\mbox{ if }x_{N+1}=0,
\end{cases}
\]
so that
\begin{equation}\label{prodotto}
X\cdot\nu(X)=\begin{cases} |X|&\mbox{ if }x_{N+1}>0,\\
0&\mbox{ if }x_{N+1}=0.
\end{cases}
\end{equation}

For any $i,j\in\{1,\ldots,N+1\}$, by Green's formula we have
\[
\int_{\Delta_R}v_iv_{ij}x_j\,dX=\frac{1}{2}\int_{\partial \Delta_R}
(v_i)^2x_j\nu_j\,d\sigma-\frac{1}{2}\int_{\Delta_R}(v_i)^2dX.
\]
Hence, denoting by $\delta_{ij}$ the usual Kronecker symbol, we have
\[
\begin{aligned}
-\int_{\Delta_R}v_{ii}v_jx_j\,dX&= -\int_{\partial
\Delta_R}v_iv_jx_j\nu_i\,d\sigma+\int_{\Delta_R}
[\delta_{ij}v_iv_j+v_iv_{ij}x_j]\,dX\\
&=\int_{\partial \Delta_R}
\left[\frac{1}{2}(v_i)^2x_j\nu_j-v_iv_jx_j\nu_i
\right]\,d\sigma\\
& +\int_{\Delta_R}\left[\delta_{ij}v_iv_j-
\frac{1}{2}(v_i)^2\right]\,dX.
\end{aligned}
\]
Summing up over $i,j\in\{1,\ldots,N+1\}$, by \eqref{prodotto}, we
get \eqref{deltagrad}.

In order to prove \eqref{ggrad}, observe that for any
$i\in\{1,\ldots,N+1\}$ we have
\[
\int_{\Delta_R}g(v)v_ix_i\,dX=\int_{\partial
\Delta_R}G(v)x_i\nu_i\,d\sigma -\int_{\Delta_R}G(v)\,dX.
\]
Summing up, using \eqref{prodotto}, we obtain \eqref{ggrad}.
\end{proof}

We now focus on the 3--dimensional case, i.e. on problem \eqref{modello}.
\begin{lem}\label{lem2}
If $v\in L^3(\R^3)$ and $\phi(x)=\frac{1}{|x|}\ast v^2$, then
\begin{equation}\label{phigrad}
\begin{aligned}
\int_{b_R} &v \phi Dv\cdot x\,dx=-\frac{3}{2}\int_{b_R}
v^2\phi\,dx+\frac{1}{16\pi}\int_{b_R}|D\phi|^2dx\\
&+\int_{\partial b_R} \left[\frac{R}{2}v^2\phi
-\frac{1}{8\pi}\left(\frac{R}{2}|D\phi|^2-\frac{1}{R}|x\cdot
D\phi|^2 \right)\right]\,d\tau.
\end{aligned}
\end{equation}
\end{lem}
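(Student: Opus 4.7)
The plan is to reduce \eqref{phigrad} to a classical Pohozaev-type identity for $\phi$ on the ball $b_R$, using the symmetry $vDv=\tfrac{1}{2}D(v^2)$ and the Poisson equation $-\Delta\phi=4\pi v^2$. First I would rewrite $v\phi\,Dv\cdot x=\tfrac{1}{2}\phi\,D(v^2)\cdot x$ and integrate by parts componentwise over $b_R\subset\R^3$ (a genuine $3$-dimensional ball, so the only boundary piece is $\partial b_R$ with outward unit normal $\nu=x/R$ and $x\cdot\nu=R$). Since $\sum_{i}(\phi x_i)_i=D\phi\cdot x+3\phi$, this immediately yields
\[
\int_{b_R}v\phi\,Dv\cdot x\,dx=-\frac{3}{2}\int_{b_R}v^2\phi\,dx-\frac{1}{2}\int_{b_R}v^2\,D\phi\cdot x\,dx+\frac{R}{2}\int_{\partial b_R}v^2\phi\,d\tau,
\]
which already produces the $-\tfrac{3}{2}\int v^2\phi$ bulk term and the $\tfrac{R}{2}\int v^2\phi$ boundary term of \eqref{phigrad}. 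The task thereby reduces to matching $-\tfrac{1}{2}\int_{b_R}v^2\,D\phi\cdot x\,dx$ with the remaining $|D\phi|^2$ pieces.

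Next I would use the Poisson relation to substitute $v^2=-\tfrac{1}{4\pi}\Delta\phi$, obtaining
\[
-\frac{1}{2}\int_{b_R}v^2\,D\phi\cdot x\,dx=\frac{1}{8\pi}\int_{b_R}\Delta\phi\,(D\phi\cdot x)\,dx,
\]
and then carry out the classical Pohozaev computation for $\phi$ on a ball in $\R^N$: two integrations by parts (moving one derivative off $\Delta\phi$, and then using $\sum_{j}\phi_j\phi_{ij}=\tfrac{1}{2}(|D\phi|^2)_i$) give
\[
\int_{b_R}\Delta\phi\,(D\phi\cdot x)\,dx=\frac{N-2}{2}\int_{b_R}|D\phi|^2\,dx+\int_{\partial b_R}\left[(D\phi\cdot x)(D\phi\cdot\nu)-\frac{1}{2}|D\phi|^2(x\cdot\nu)\right]d\tau.
\]
Specializing to $N=3$ makes the bulk coefficient equal to $\tfrac{1}{2}$, while $\nu=x/R$ on $\partial b_R$ yields $D\phi\cdot\nu=\tfrac{1}{R}D\phi\cdot x$ and $x\cdot\nu=R$, turning the boundary bracket into $\tfrac{1}{R}|x\cdot D\phi|^2-\tfrac{R}{2}|D\phi|^2$. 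Multiplying by $\tfrac{1}{8\pi}$ produces exactly $\tfrac{1}{16\pi}\int_{b_R}|D\phi|^2\,dx$ and the second boundary piece of \eqref{phigrad}; adding the first step closes the identity.

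The only genuinely delicate point is justifying the integrations by parts, but this is mild: under the regularity stated just before Lemma \ref{lemma1} we have $v\in L^\infty\cap C^{0,\alpha}\cap W^{1,q}_{\rm loc}$ and $v^2\in L^p(\R^3)$ for all $p\in[1,\infty]$, so standard Newtonian-potential/Calder\'on--Zygmund estimates give $\phi\in C^{1,\alpha}_{\rm loc}(\R^3)$ with $D\phi\in L^2(\R^3)$, which is more than sufficient to perform the above computations on the bounded set $b_R$ (for the general case $v\in H^1(\RN)\cap H^2_{\rm loc}(\RN)$ alluded to before Lemma \ref{lemma1}, one argues via smooth approximations and passes to the limit, all boundary traces being well defined). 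I therefore do not anticipate any real obstacle; the entire proof is a one-step integration by parts for $\phi\,D(v^2)\cdot x$ followed by a Pohozaev identity for $\phi$ on the ball, stitched together by the Poisson equation.
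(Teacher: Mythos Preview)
Your proof is correct and follows essentially the same route as the paper: the paper too derives the identity \eqref{prima} (your first displayed formula), the Pohozaev identity \eqref{seconda} for $\phi$ on $b_R$ (your second displayed formula with $N=3$), and links them via the Poisson equation \eqref{eqphi}, which is exactly your substitution $v^2=-\tfrac{1}{4\pi}\Delta\phi$. The only minor difference is presentational---the paper states \eqref{prima} and \eqref{seconda} as separate intermediate identities obtained ``operating as we did to prove Lemma~\ref{lemma1}'' and cites \cite{tdnonex}, whereas you spell out the two integrations by parts explicitly.
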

\begin{proof}
Let us note that $\phi(x)\in H^2_{\rm loc}(\R^3)\cap D^1(\R^3)$ is a
solution of
\begin{equation}\label{eqphi}
-\Delta \phi=4\pi v^2 \mbox{ in }\R^3,
\end{equation}
where $D^1(\R^3)=\overline{C^\infty_C(\R^3)}^{\|\cdot\|}$, with
$\|\phi\|^2=\int_{\R^3}|D\phi|^2$ (see also \cite{teadim}).

Operating as we did to prove Lemma \ref{lemma1}, replacing
$\Delta_R$ with $b_R$, we can prove that
\begin{equation}\label{prima}
\int_{b_R}v\phi x\cdot Dv\,dx=-\frac{1}{2}\int_{b_R} v^2 x\cdot D
\phi\, dx -\frac{3}{2}\int_{b_R}  v^2 \phi\,dx
+\frac{R}{2}\int_{\partial b_R}v^2 \phi\,d\tau
\end{equation}
and
\begin{equation}\label{seconda}
-\int_{b_R}\Delta \phi x\cdot
D\phi\,dx=-\frac{1}{2}\int_{b_R}|D\phi|^2dx+\int_{\partial
b_R}\left[\frac{R}{2}|D\phi|^2-\frac{1}{R}|D\phi\cdot
x|^2\right]\,d\tau,
\end{equation}
see also \cite[Lemma 3.1]{tdnonex}.

On the other hand, from \eqref{eqphi} we have
\begin{equation}\label{ultimaeq}
4\pi \int_{b_R}v^2x\cdot D\phi\,dx=-\int_{b_R}\Delta\phi x\cdot
D\phi\,dx.
\end{equation}
Starting from \eqref{prima}, using \eqref{ultimaeq} and
\eqref{seconda}, the claim follows.
\end{proof}

\begin{lem}\label{lem3}
If $v\in H^1(\R^3)$, there exists a sequence $R_n\to \infty$ such
that
\[
\int_{\partial b_{R_n}}
\left[\frac{R_n}{2}v^2\phi
-\frac{1}{8\pi}\left(\frac{R_n}{2}|D\phi|^2-\frac{1}{R_n}|x\cdot
D\phi|^2 \right)\right]\,d\tau\to 0
\]
as $n\to \infty$.
\end{lem}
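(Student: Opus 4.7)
The plan is to show that each piece appearing in the integrand, once integrated along the radial direction, produces a globally integrable quantity on $(0,\infty)$, so that the existence of a suitable sequence $R_n\to\infty$ follows from a standard soft argument.

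First I would verify that $v^2\phi\in L^1(\R^3)$ and $|D\phi|^2\in L^1(\R^3)$. For the first, expanding the convolution and applying the Hardy--Littlewood--Sobolev inequality gives
\[
\int_{\R^3}v^2\phi\,dx=\int_{\R^3}\int_{\R^3}\frac{v^2(x)v^2(y)}{|x-y|}\,dy\,dx\leq C\,|v^2|_{6/5}^2=C\,|v|_{12/5}^4,
\]
which is finite because the exponent $12/5$ lies in the Sobolev embedding range of $H^1(\R^3)$ (and also of $H^{1/2}(\R^3)$, which is where the solutions of \eqref{modello} actually live). For the second quantity, it suffices to recall from the proof of Lemma \ref{lem2} that $\phi\in D^1(\R^3)$, so $\int_{\R^3}|D\phi|^2\,dx<\infty$ by definition; equivalently one may integrate by parts in \eqref{eqphi} to get $\int|D\phi|^2\,dx=4\pi\int v^2\phi\,dx$.

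Next, since $v^2\phi\geq 0$ and $|D\phi|^2\geq 0$, Fubini in spherical coordinates shows that the nonnegative function
\[
g(R):=\int_{\partial b_R}\bigl(v^2\phi+|D\phi|^2\bigr)\,d\tau
\]
belongs to $L^1((0,\infty))$. The elementary observation I would then invoke is that this forces $\liminf_{R\to\infty}R\,g(R)=0$: otherwise $g(R)\geq c/R$ for all sufficiently large $R$, contradicting integrability on $(0,\infty)$. One therefore extracts a sequence $R_n\to\infty$ with $R_n\,g(R_n)\to 0$.

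To conclude, on $\partial b_{R_n}$ one has $|x|=R_n$, so Cauchy--Schwarz gives $|x\cdot D\phi|^2\leq R_n^2|D\phi|^2$ pointwise, which controls the ``bad'' term through the bound
\[
\left|\frac{R_n}{2}|D\phi|^2-\frac{1}{R_n}|x\cdot D\phi|^2\right|\leq\frac{R_n}{2}|D\phi|^2.
\]
Combined with the nonnegative contribution $\tfrac{R_n}{2}v^2\phi$, the target integral is therefore dominated in absolute value by a constant multiple of $R_n\,g(R_n)$, which tends to $0$. The only mildly delicate point is to package $v^2\phi$ and $|D\phi|^2$ into a single integrable function so that one single sequence $R_n$ serves all three boundary terms simultaneously; this is precisely what ensures we obtain a genuine subsequential limit rather than merely $\liminf=0$ for each term separately, and beyond this everything is routine.
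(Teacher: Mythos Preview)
Your argument is correct and follows essentially the same route as the paper: both reduce the claim to the elementary fact that an $L^1(\R^3)$ function $f$ admits a sequence $R_n\to\infty$ with $R_n\int_{\partial b_{R_n}}|f|\,d\tau\to0$, both bound the cross term via $|x\cdot D\phi|^2\leq R_n^2|D\phi|^2$ on $\partial b_{R_n}$, and both establish $v^2\phi,|D\phi|^2\in L^1(\R^3)$ (you via Hardy--Littlewood--Sobolev, the paper via H\"older with $v^2\in L^{6/5}$ and $\phi\in L^6$). Your explicit remark about packaging the terms into a single $g$ so that one sequence serves all boundary contributions is a point the paper leaves implicit.
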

\begin{proof}
We follow the lines of \cite{BL}. First, let us note that on
$\partial b_{R_n}$ we have $\frac{1}{R_n}|x\cdot D\phi|^2\leq
\frac{1}{R_n}|x|^2|D\phi|^2=R_n|D\phi|^2\in L^1(\R^3)$. Moreover,
$v^2\in L^{6/5}(\R^3)$ by interpolation, and $\phi\in L^6(\R^3)$ by
the Sobolev inequality: hence $v^2\phi\in L^1(\R^3)$. Thus it is
enough to prove that, if $f\in L^1(\R^3)$, then there exists a
sequence $R_n\to \infty$ such that
\[
R_n\int_{\partial b_{R_n}}|f|\,d\tau\to 0.
\]
Assume this is not the case, so that there exists $\ve, R_0>0$ such
that
\[
R\int_{\partial b_R}|f|\,d\tau\geq \ve \mbox{ for every }R\geq R_0.
\]
Then
\[
\infty>\int_{\R^3}|f|\,dx=\int_0^\infty dR\int_{\partial
b_R}|f|\,d\tau\geq \int_{R_0}^\infty \frac{\ve}{R}\,dR=\infty,
\]
and a contradiction arises.
\end{proof}

\begin{lem}\label{lem4}
If $v\in H^1(\R^3)$, then
\[
\int_{\R^3} v \left(\frac{1}{|x|}\ast v^2\right) Dv\cdot
x\,dx=-\frac{5}{4}\int_{\R^3} v^2\left(\frac{1}{|x|}\ast
v^2\right)\,dx.
\]
\end{lem}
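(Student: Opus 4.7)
The plan is to combine Lemma \ref{lem2} with the boundary-vanishing Lemma \ref{lem3}, pass to the limit along the distinguished sequence $R_n\to\infty$, and then reduce the remaining $|D\phi|^2$ term to a multiple of $\int v^2\phi\,dx$ using the Poisson equation $-\Delta\phi=4\pi v^2$.

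First I would set $\phi=\frac{1}{|x|}\ast v^2$ and note that for $v\in H^1(\R^3)$ one has $v\in L^3(\R^3)$ by Sobolev embedding, so Lemma \ref{lem2} applies on every ball $b_R$. Applying it along the sequence $R_n$ produced by Lemma \ref{lem3}, the boundary integral on $\partial b_{R_n}$ tends to $0$, and by monotone/dominated convergence the solid integrals over $b_{R_n}$ converge to their counterparts over $\R^3$ (here I use that $v^2\phi\in L^1(\R^3)$ and $|D\phi|^2\in L^1(\R^3)$: the first is standard Hardy--Littlewood--Sobolev, and the second follows from $\phi\in D^1(\R^3)$, which is noted in the proof of Lemma \ref{lem2}). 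This yields
\begin{equation*}
\int_{\R^3} v\phi\,Dv\cdot x\,dx = -\frac{3}{2}\int_{\R^3} v^2\phi\,dx + \frac{1}{16\pi}\int_{\R^3}|D\phi|^2\,dx.
\end{equation*}

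Next I would eliminate $|D\phi|^2$. Multiplying $-\Delta\phi=4\pi v^2$ by $\phi$ and integrating (justified by $\phi\in D^1(\R^3)\cap H^2_{\mathrm{loc}}(\R^3)$ together with the decay $\phi(x)=O(|x|^{-1})$ and $D\phi(x)=O(|x|^{-2})$ at infinity, which makes the boundary term at infinity vanish) gives
\begin{equation*}
\int_{\R^3}|D\phi|^2\,dx = 4\pi \int_{\R^3} v^2\phi\,dx.
\end{equation*}
Substituting, the coefficient becomes $-\frac{3}{2}+\frac{1}{4}=-\frac{5}{4}$, which is exactly the desired identity.

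The main technical obstacle is the justification of the two limit passages: (i) discarding the boundary integral on $\partial b_{R_n}$, which is exactly what Lemma \ref{lem3} provides, and (ii) checking that the identity $\int|D\phi|^2=4\pi\int v^2\phi$ holds globally for the Newtonian potential of $v^2$; this requires the decay information on $\phi$ and $D\phi$ (a standard consequence of $v^2\in L^{6/5}(\R^3)$ via the explicit convolution representation) in order to kill the surface term at infinity when integrating by parts. Once these two points are in place, the lemma follows by a straightforward algebraic combination.
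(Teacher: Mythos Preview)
Your proof is correct and follows exactly the paper's approach: pass to the limit in Lemma~\ref{lem2} along the sequence from Lemma~\ref{lem3} to obtain the identity with the $\frac{1}{16\pi}\int|D\phi|^2$ term, then use $\int_{\R^3}|D\phi|^2=4\pi\int_{\R^3}v^2\phi$ (which the paper records as \eqref{identitaphi}) and substitute. You simply spell out a few integrability and decay justifications that the paper leaves implicit.
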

\begin{proof}
From Lemma \ref{lem2}, applying Lemma \ref{lem3}, we find
\[
\int_{\R^3} v \phi Dv\cdot x\,dx=-\frac{3}{2}\int_{\R^3}
v^2\phi\,dx+\frac{1}{16\pi}\int_{\R^3}|D\phi|^2dx.
\]
On the other hand, $\phi$ being a solution in $D^1(\R^3)$ of
\eqref{eqphi}, we get
\begin{equation}\label{identitaphi}
\int_{\R^3}|D\phi|^2dx=4\pi \int_{\R^3}v^2\phi\,dx.
\end{equation}
Substituting, we get the claim.
\end{proof}

\medskip

We are now ready to prove the variational identity \eqref{nonso} for
the generalized half Laplacian, which we believe to be quite useful
in studying system \eqref{modello}:
\begin{proof}[Proof of Theorem $\ref{identita}$]
Multiply the first equation in \eqref{modello} by $X\cdot Dv$ and
integrate on $\Delta_R$. Applying Lemma \ref{lemma1} with $g(s)=s$
we get
\begin{equation}\label{dainizio}
\begin{aligned}
0&=-\int_{\Delta_R}|Dv|^2dX+\int_{S_R^+}\left[\frac{R}{2}|Dv|^2-
\frac{1}{R}|Dv\cdot X|^2 \right]\,d\sigma \\
&+\int_{b_R}v_4Dv\cdot
x\,dx -2m^2\int_{\Delta_R}v^2dX+\frac{m^2R}{2}
\int_{S_R^+}v^2dx.
\end{aligned}
\end{equation}
By the boundary condition in \eqref{PN} we have
\[
\int_{b_R}v_4Dv\cdot x\,dx=-\int_{b_R}\left[ \omega
v+\lambda\left(\frac{1}{|x|}\ast v^2\right)v -F'(v)\right]Dv\cdot
x\,dx.
\]
Operating as in the proof of Lemma \ref{lem2} (see also \cite[Lemma
3.1]{tdnonex}), setting $\phi=\left(\frac{1}{|x|}\ast v^2\right)$,
we can prove that
\begin{equation}\label{p12}
\begin{aligned}
&\int_{b_R}\left[\omega v+\lambda\phi v -F'(v)\right]Dv\cdot x\,dx\\
&=\frac{\lambda}{16\pi}\int_{b_R}|D\phi|^2dx-\frac{3}{2}\int_{b_R}(\omega+\lambda\phi)v^2\,dx
+3\int_{b_R}F(v)\,dx\\
&+\int_{\partial b_R}\left[\frac{R}{2}(\omega+\lambda\phi)v^2\phi
-\frac{\lambda}{8\pi}\left(\frac{R}{2}|D\phi|^2-\frac{1}{R}|x\cdot
D\phi|^2\right) -RF(v)\right]\,d\tau.
\end{aligned}
\end{equation}

Substituting \eqref{p12} into \eqref{dainizio}, we obtain
\begin{equation}\label{p13}
\begin{aligned}
0&=-\int_{\Delta_R}|Dv|^2dX+\int_{S_R^+}\left[\frac{R}{2}|Dv|^2-
\frac{1}{R}|Dv\cdot X|^2 \right]\,d\sigma\\
&-\frac{\lambda}{16\pi}\int_{b_R}|D\phi|^2dx+\frac{3}{2}\int_{b_R}(\omega+\lambda\phi)v^2\,dx
-3\int_{b_R}F(v)\,dx\\
&-\int_{\partial b_R}\left[\frac{R}{2}(\omega+\lambda\phi)v^2\phi
-\frac{\lambda}{8\pi}\left(\frac{R}{2}|D\phi|^2-\frac{1}{R}|x\cdot
D\phi|^2\right) -RF(v)\right]\,d\tau\\
&-2m^2\int_{\Delta_R}v^2dX+\frac{m^2R}{2}
\int_{S_R^+}v^2dx.
\end{aligned}
\end{equation}

As in the proof of Lemma \ref{lem3} we can find a sequence $R_n\to
\infty$ such that the integrals over $\partial b_{R_n}$ and over
$S_R^+$ go to 0 as $n\to \infty$. In this way \eqref{p13} gives
\[
\begin{aligned}
0&=-\int_{\R^4_+}|Dv|^2dX-2m^2\int_{\R^4_+}v^2dX\\
&
-\frac{\lambda}{16\pi}\int_{\R^3}|D\phi|^2dx+\frac{3}{2}\int_{\R^3}(\omega+\lambda\phi)v^2\,dx
-3\int_{\R^3}F(v)\,dx.
\end{aligned}
\]

By substituting in the equation above the term $\int |D\phi|^2$
taken from \eqref{identitaphi}, we finally get \eqref{nonso}.
\end{proof}

We are now ready for the
\begin{proof}[Proof of Theorem $\ref{nonex}$]
Since $v$ is a solution of \eqref{PN}, we have
\begin{equation}\label{diparte}
\int_{\R^4_+}(|Dv|^2+m^2v^2)\,dX=\int_{\R^3}[\omega
v+\lambda\phi v-F'(v)]v\,dx.
\end{equation}

Now we isolate $\int_{\R^4_+}|Dv|^2\,dX$ in \eqref{diparte} and
substituting in \eqref{nonso}, we get
\[
\begin{aligned}
0&=-m^2\int_{\R^4_+}v^2dX+\frac{\omega}{2}
\int_{\R^3}v^2dx+\frac{1}{4}\lambda
\int_{\R^3}v^2\phi\,dx\\
&-\int_{\R^3}\big[3F(v)-F'(v)v\big]\,dx.
\end{aligned}
\]
Thus, if \eqref{ipo1W} holds, we get $v\equiv 0$.

On the other hand, if we isolate $\int_{\R^4_+}v^2\,dX$ in
\eqref{diparte} and we substitute in \eqref{nonso}, we get
\[
\begin{aligned}
0&=\int_{\R^4_+}|Dv|^2dX-\frac{\omega}{2}\int_{\R^3}v^2dx-\frac{3}{4}\lambda
\int_{\R^3}v^2\phi\,dx\\
&+\int_{\R^3}\big[2F'(v)v-3F(v)\big]\,dx.
\end{aligned}
\]
Thus, if \eqref{ipo2W} holds, again we get $v\equiv 0$.

Now, if $\omega>0$ and $\lambda\leq 0$, starting from \eqref{nonso}, using \eqref{C2}, we have
\[
\begin{aligned}
0&\leq \left(-1+\frac{3\omega}{2\mu}\right)\int_{\R^4_+}|Dv|^2dX+\left(\frac{3\omega \mu}{2}-2m^2\right)\int_{\R^4_+}v^2dX
\\
&+\frac{5}{4}\lambda\int_{\R^3}v^2\phi\,dx-3\int_{\R^3}F(v)\,dx.
\end{aligned}
\]
Choosing $\mu\in
\left[\frac{3\omega}{2},\frac{4m^2}{3\omega}\right]$, by
\eqref{ipo3W} we obtain again $v\equiv 0$. If $\omega=0$ the
conclusion is easier.

The last statement needs a longer proof and the following estimate,
which also establishes a non vanishing property for nontrivial
solutions of \eqref{modello}, and for whose proof we direct the
reader to the Appendix:
\begin{lem}\label{stimala}
If $v\in H^1(\RN)$ solves \eqref{PN} with $\lambda>0$, $\omega\in
(0,m)$ and $F_s(x,s)s\geq 0$ for every $s\in \R$ and a.e. $x\in
\R^N$, then
\begin{equation}\label{stimata}
\|v\|^2\leq \frac{m}{m-\omega}\lambda \int_{\R^N}\big(W\ast v^2\big)
v^2\,dx
\end{equation}
and there exists $C=C(m,\omega)>0$ such that
\begin{equation}\label{stimabasso}
\|v\|^2\geq \frac{C}{\lambda}
\end{equation}
for any nontrivial solution of \eqref{modello}.
\end{lem}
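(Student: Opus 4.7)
The plan is to test the weak formulation of \eqref{PN} against $v$ itself and then apply the trace inequality \eqref{C2} together with the convolution estimate \eqref{maggconv}. Both bounds should fall out by elementary rearrangements.

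First, I would take $w=v$ in \eqref{defsol} to obtain
\[
\|v\|^2=\omega\int_{\R^N}v^2\,dx+\lambda\int_{\R^N}\bigl(W\ast v^2\bigr)v^2\,dx-\int_{\R^N}F_s(x,v)v\,dx.
\]
Since $F_s(x,s)s\geq 0$ by hypothesis, the last integral is nonnegative and can be dropped, giving
\[
\|v\|^2\leq \omega\,|v|_2^2+\lambda\int_{\R^N}\bigl(W\ast v^2\bigr)v^2\,dx.
\]
Next, I apply \eqref{C2} with $\varepsilon=m$, which yields $|v|_2^2\leq \frac{1}{m}\|v\|^2$. Inserting this and reorganising produces
\[
\Bigl(1-\frac{\omega}{m}\Bigr)\|v\|^2\leq \lambda\int_{\R^N}\bigl(W\ast v^2\bigr)v^2\,dx,
\]
which, since $\omega<m$, is exactly \eqref{stimata}.

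For the lower bound \eqref{stimabasso}, I would invoke \eqref{maggconv}: there exists $C>0$ depending only on $W$ such that $\int_{\R^N}(W\ast v^2)v^2\,dx\leq C\|v\|^4$. Combining with \eqref{stimata} gives
\[
\|v\|^2\leq \frac{m\,C\lambda}{m-\omega}\,\|v\|^4,
\]
and dividing by $\|v\|^2\neq 0$ (which is legitimate for a nontrivial solution) yields $\|v\|^2\geq \frac{m-\omega}{mC\lambda}$, with the constant $\frac{m-\omega}{mC}$ depending only on $m,\omega$ (and the fixed weight $W$).

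There is no real obstacle here: the argument is a direct manipulation of the weak formulation, and the only ingredients are the sign assumption on $F_s(x,s)s$, the trace inequality \eqref{C2}, and the convolution estimate \eqref{maggconv}, all available from the preceding sections. One minor point to be careful about is to use the trace bound in the sharp form that gives the constant $1/m$ (so that the coefficient $\frac{m}{m-\omega}$ comes out exactly as stated), which is why $\varepsilon=m$ is the natural choice in \eqref{C2}.
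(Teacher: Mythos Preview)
Your proof is correct and follows essentially the same approach as the paper: test \eqref{defsol} with $w=v$, drop the nonnegative $F_s$ term, apply \eqref{C2} with $\varepsilon=m$ to absorb $\omega|v|_2^2$ into $\frac{\omega}{m}\|v\|^2$, and then use \eqref{maggconv} to turn \eqref{stimata} into the lower bound \eqref{stimabasso}. The only cosmetic difference is that the paper writes the intermediate inequality as $\|v\|^2\leq \frac{\omega}{m}\int_{\RN}|Dv|^2+m\omega\int_{\RN}v^2+\lambda\int_{\R^N}(W\ast v^2)v^2$ before collapsing it, whereas you pass directly through $|v|_2^2\leq \frac{1}{m}\|v\|^2$.
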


Let us start noting that for any solution $v$ and for any $\rho>0$
we have
\begin{equation}\label{ultimissima}
\begin{aligned}
0&=\eqref{nonso}+\rho J'(v)v\\
& =(\rho-1)\int_{\R^4_+}|Dv|^2dX+(\rho-2)m^2\int_{\R^4_+}v^2dX+
\left(\frac{3}{2}-\rho\right)\omega \int_{\R^3}v^2dx\\
&+\left(\frac{5}{4}-\rho\right)\lambda
\int_{\R^3}v^2\phi\,dx+\int_{\R^3}[\rho F'(v)v-3F(v)]\,dx.
\end{aligned}
\end{equation}

Now, take any $\rho<3/2$ and start from \eqref{ultimissima};
applying \eqref{C2} with $\ve>0$, we obtain
\begin{equation}\label{mancava}
\begin{aligned}
0&\leq \left[\rho-1+\ve \omega\left(\frac{3}{2}-\rho\right)\right]
\int_{\RN}|Dv|^2dxdx_{N+1}\\
&+m^2 \left[ \rho-2+\frac{\omega}{\ve
m^2}\left(\frac{3}{2}-\rho\right)\right]\int_{\RN}v^2dxdx_{N+1}\\
&+\left(\frac{5}{4}-\rho\right)\lambda
\int_{\R^N}v^2\phi\,dx+\int_{\R^3}[\rho F'(v)v-3F(v)]\,dx.
\end{aligned}
\end{equation}
By choosing $\ve$ small enough, we can suppose that
\[
\rho-1+\ve \omega\left(\frac{3}{2}-\rho\right)\leq
\rho-2+\frac{\omega}{\ve m^2}\left(\frac{3}{2}-\rho\right),
\]
so that \eqref{mancava} becomes
\[
0\leq \left[ \rho-2+\frac{\omega}{\ve
m^2}\left(\frac{3}{2}-\rho\right)\right]\|v\|^2+
\left(\frac{5}{4}-\rho\right)\lambda
\int_{\R^N}v^2\phi\,dx+\int_{\R^3}[\rho F'(v)v-3F(v)]\,dx.
\]
By applying \eqref{stimata} with $W(x)=1/|x|$ and $N=3$, we find
\begin{equation}\label{bellina}
\begin{aligned}
0&\leq \left\{\left[ \rho-2+\frac{\omega}{\ve
m^2}\left(\frac{3}{2}-\rho\right)\right]\frac{m}{m-\omega}+
\left(\frac{5}{4}-\rho\right)\right\}\lambda
\int_{\R^N}v^2\phi\,dx\\
&+\int_{\R^3}[\rho F'(v)v-3F(v)]\,dx.
\end{aligned}
\end{equation}
After some calculations, we find that the coefficient of $\int
v^2\phi$ in the inequality above is non positive iff
\[
\rho \frac{\omega}{m}\left(\frac{1}{\ve m}-1\right)\geq
\frac{3\omega}{2\ve m^2} -\frac{5\omega}{4m}-\frac{3}{4}.
\]
If $\ve<1/m$ is small enough, it is clear that both sides of the
previous inequality are positive, so that we are allowed to choose
\[
\rho\geq \left(6-5\ve m-\frac{3\ve m^2}{\omega}\right)
\frac{1}{4(1-\ve m)}.
\]
Let us remark that the function $g(\ve)=(6-5\ve m-3\ve
m^2/\omega)[4(1-\ve m)]^{-1}$ is strictly decreasing in $(0,1/m)$,
and that $g(0^+)=3/2$, so that a possible choice on $\rho<3/2$ is
given for $\ve<1/m$.

Therefore, passing to the limit as $\rho\uparrow 3/2$ in
\eqref{bellina}, we get
\[
0\leq \frac{\omega -3m}{4m-4\omega}\lambda \int_{\R^N}v^2\phi\,dx+
\frac{3}{2}\int_{\R^3}[F'(v)v-2F(v)]\,dx \leq  \frac{\omega
-3m}{4m-4\omega}\lambda \int_{\R^N}v^2\phi\,dx
\]
by assumption on $F$ appearing in \eqref{ipo4W}. Being the remaining
coefficient a strictly negative number, we get $\int v^2\phi=0$, and
from \eqref{stimata} also $v\equiv0$.

Theorem \ref{nonex} is now completely proved.
\end{proof}

We conclude with the
\begin{proof}[Proof of Theorem $\ref{generale}$]
The first part is very similar to the proof of Theorem \ref{nonex},
which was obtained adding \eqref{nonso}$+\rho J'(v)v$ for $\rho=1$
and $\rho=2$. Now, starting from \eqref{ultimissima}, if
\eqref{ipo1Wrho} holds, all the coefficients are less or equal to 0,
and we get $v\equiv0$; if \eqref{ipo2Wrho} holds, all the
coefficients in \eqref{ultimissima} are nonnegative, and we obtain
again $v\equiv 0$.

Now assume the \eqref{ipo3Wrho} holds. Since $\omega\in
\left(0,2m\frac{\sqrt{(\rho-1)(\rho-2)}}{2\rho-3}\right]$ and
$\lambda\leq0$, starting from \eqref{ultimissima} and using
\eqref{C2} and the fact that $\rho>2$, we find
\[
0\geq
\left[\rho-1+\left(\frac{3}{2}-\rho\right)\frac{\omega}{\ve}\right]
\int_{\R^4_+}|Dv|^2dX+\left[(\rho-2)m^2+\left(\frac{3}{2}-
\rho\right)\ve\omega\right] \int_{\R^4_+}v^2dX.
\]
Both the coefficients of the integrals above are nonnegative
provided that
\[
\frac{2\rho-3}{\rho-1}\frac{\omega}{2}\leq \ve\le
2\frac{(\rho-2)m^2}{(2\rho-3)\omega},
\]
which is possible by the bound on $\omega$.
\end{proof}

\appendix
\section{Appendix}

\begin{proof}[Proof of Lemma $\ref{genere}$]
Fix $k\in \N$ and take any subspace $H_k$ of $H^1_r(\RN)$ having
dimension $k$. We first prove that $M_k:=M\cap H_k$, which is non
empty by the definition of the map in \eqref{psimia}, is bounded.
Indeed, assume by contradiction that there exists an unbounded
sequence $(v_n)_n$ in $M_k$. Without loss of generality, we can
assume that $\frac{v_n}{\|v_n\|}\to v\in H_k$, where $\|v\|=1$.

Setting $U_n=v_nW\ast v_n^2$, then
\[
\int_{\R^N}U_nv_ndx=\int_{\R^N}v_n^2W\ast v_n^2dx=1.
\]
Dividing both sides by $\|v_n\|^4$, we obtain
\[
0=\lim_{n\to \infty}\int_{\R^N}\frac{U_nv_ndx}{\|v_n\|^4}=\lim_{n\to
\infty} \int_{\R^N}
 \frac{v_n^2}{\|v_n\|^2} W\ast  \frac{v_n^2}{\|v_n\|^2}dx=\int_{\R^N}v^2W\ast
 v^2dx>0
\]
since $v$ is nontrivial, and we get a contradiction.

$M_k$ being bounded and closed in $H_k$, we get that $M_k$ is a
compact subset of $H_k$.

We now show that $\gamma(M_k)\geq k$, and being $k$ arbitrary, we
deduce that $\tilde{\gamma}(\mathcal M_\sigma)=\infty$, and the
lemma is proved. For this, let us consider the map $\pi:S^{k-1}\to
M$, where $S^{k-1}$ denotes the unit sphere of $H_k$, defined as
$\pi(u)=u_M$, $u_M$ being the unique point of the half line $\R^+u$
intersecting $M$; note that the definition of $\pi$ is well posed by
\eqref{psimia}. Clearly $\pi$ is an odd continuous map with
$\pi(S^{k-1})=M_k$; thus, by \cite[Proposition 5.4
$(4^\circ)$]{Struwe}, we get $\gamma(M_k)\geq \gamma(S^{k-1})$,
while $\gamma(S^{k-1})=k$ by \cite[Proposition 5.2]{Struwe}.
\end{proof}

\begin{proof}[Proof of Lemma $\ref{stimala}$]
In \eqref{defsol} take $w=v$, obtaining
\[
\begin{aligned}
\|v\|^2&=\omega \int_{\R^N}v^2dx+\lambda \int_{\R^N}\big(W\ast
v^2\big)v^2dx-\int_{\R^N}F_s(x,v)v\,dx\\
&\leq \omega \int_{\R^N}v^2dx+\lambda \int_{\R^N}\big(W\ast
v^2\big)v^2dx
\end{aligned}
\]
by the assumption on $F_s$. Using \eqref{C2} with $\ve=m$ we obtain
\[
\|v\|^2\leq \frac{\omega}{m} \int_{\RN}|Dv|^2dxdx_{N+1}+m\omega
\int_{\RN}v^2dxdx_{N+1}+\lambda \int_{\R^N}\big(W\ast v^2\big)v^2dx,
\]
from which \eqref{stimata} follows.

From \eqref{stimata}, by applying \eqref{maggconv}, we immediately
get \eqref{stimabasso} for any solution $v\not \equiv 0$.
\end{proof}

\end{document}